\newenvironment{claim}[1]{\par\noindent\underline{Claim}\space#1}{}
\newenvironment{claimproof}[1]{\par\noindent\underline{Proof of Claim}\space#1}{\hfill $\blacksquare$}
\title[Frobenius push-forward of the structure sheaf]
{A characterization of ordinary abelian varieties 
by the Frobenius push-forward of the structure sheaf} 
\author{Akiyoshi Sannai and Hiromu Tanaka} 
\subjclass[2010]{14K05, 13A35.}
\keywords{Frobenius splitting, ordinary abelian varieties}
\address{Graduate School of Mathematical Sciences, University of Tokyo,
Meguro, Tokyo, 153-9814, Japan.}
\email{sannai@ms.u-tokyo.ac.jp}
\address{Department of Mathematics, Imperial College, London, 180 Queen's Gate, 
London SW7 2AZ, UK} 
\email{h.tanaka@imperial.ac.uk}
\newcommand{\rank}[0]{{\operatorname{rank}}}
\newcommand{\Alb}[0]{{\operatorname{Alb}}}
\newcommand{\Ker}[0]{{\operatorname{Ker}}}
\newcommand{\Image}[0]{{\operatorname{Image}}}
\newcommand{\Spec}[0]{{\operatorname{Spec}}}
\newcommand{\Pic}[0]{{\operatorname{Pic}}}
\newcommand{\red}[0]{{\operatorname{red}}}
\newcommand{\Ex}[0]{{\operatorname{Ex}}}
\newtheorem{thm}{Theorem}[section]
\newtheorem{lem}[thm]{Lemma}
\newtheorem{prop}[thm]{Proposition}
\theoremstyle{definition}
\newtheorem{ques}[thm]{Question}
\newtheorem{dfn}[thm]{Definition}
\newtheorem{dfnprop}[thm]{Definition-Proposition}
\newtheorem{rem}[thm]{Remark}
\newtheorem*{ack}{Acknowledgments}      
\newtheorem{step}{Step}
\newcommand{\MO}{\mathcal{O}}
\begin{document}

\maketitle

\begin{abstract}
For an ordinary abelian variety $X$, 
$F^e_*\mathcal{O}_X$ is decomposed into line bundles for every positive integer $e$. 
Conversely, if a smooth projective variety $X$ satisfies this property and 
the Kodaira dimension of $X$ is non-negative, then $X$ is an ordinary abelian variety. 
\end{abstract}

\tableofcontents
\setcounter{section}{0}

\section{Introduction}

Let $k$ be an algebraically closed field of characteristic $p>0$. 
Let $X$ be a smooth proper variety over $k$. 
When does $X$ satisfy the following property $(*)$? 

\begin{enumerate}
\item[$(*)$]{$F_*\MO_X\simeq \bigoplus_j M_j$ where $F:X \to X$ is the absolute Frobenius morphism and 
each $M_j$ is a line bundle. }
\end{enumerate}

For example, an arbitrary smooth proper toric variety satisfies this property $(*)$ (cf. \cite{Achinger}\cite{Th}). 
Thus there are many varieties which satisfy $(*)$. 
But every toric variety has negative Kodaira dimension. 
On the other hand, we show that ordinary abelian varieties satisfy $(*)$. 
The main theorem of this paper is the following inverse result.

\begin{thm}[Theorem~\ref{main-theorem2}]\label{0main1}
Let $k$ be an algebraically closed field of characteristic $p>0$. 
Let $X$ be a smooth projective variety over $k$. 
Assume the following conditions. 
\begin{itemize}
\item{For infinitely many $e\in\mathbb Z_{>0}$, 
$F^e_*\MO_X\simeq \bigoplus_j M_j^{(e)}$ where each $M_j^{(e)}$ is an invertible sheaf. }
\item{$K_X$ is pseudo-effective (e.g. the Kodaira dimension of $X$ is non-negative).}
\end{itemize}
Then $X$ is an ordinary abelian variety. 
\end{thm}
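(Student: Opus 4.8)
The plan is to separate what the hypothesis gives \emph{immediately} from the genuinely geometric input, and then to run a dimension induction through the Albanese morphism.

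\textbf{Step 1: $X$ is $F$-split and $K_X$ is torsion.} Fix one $e$ with $F^e_*\MO_X\simeq\bigoplus_j M_j^{(e)}$. Since $H^0(X,F^e_*\MO_X)=H^0(X,\MO_X)$ is one-dimensional, exactly one summand $M_0$ carries a nonzero section; as the Frobenius $\MO_X\to F^e_*\MO_X$ is everywhere a subbundle inclusion (its cokernel is locally free) and the components of its defining section $1$ in the summands $\neq M_0$ vanish, the corresponding section of $M_0$ is nowhere zero, so $M_0\simeq\MO_X$ and the Frobenius splits. Thus $X$ is $F^e$-split, hence $F$-split, hence $\Hom_{\MO_X}(F_*\MO_X,\MO_X)\simeq H^0(X,\omega_X^{1-p})\neq 0$, so $(1-p)K_X$ is effective; as $(p-1)K_X$ is also pseudo-effective this forces $(p-1)K_X\sim 0$. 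In particular $K_X\equiv 0$, $\Pic^0(X)$ is reduced (so $\dim\Alb(X)=h^1(X,\MO_X)$), and, since Frobenius push-forward commutes with \'etale base change, every connected finite \'etale cover of $X$ again satisfies the hypotheses of the theorem.

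\textbf{Step 2: induction on $n=\dim X$.} The case $n=0$ is clear. Let $a\colon X\to A:=\Alb(X)$ be the Albanese morphism; $A$ is an abelian variety, and using $F$-splitness together with $K_X\equiv 0$ one checks that $a$ is separable, surjective, and has connected fibres (its Stein factorisation is trivial). If $\dim A=n$, then $a$ is a birational morphism from a smooth projective variety onto an abelian variety; since abelian varieties carry no rational curves, $a$ contracts nothing, so it is an isomorphism and $X\simeq A$. An $F$-split abelian variety is ordinary (classical), and we are done.

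\textbf{Step 3: ruling out $\dim A<n$.} Let $G$ be a general fibre of $a$: it is smooth, projective, connected, with $K_G=K_X|_G\equiv 0$ and $\dim G=n-\dim A\geq 1$. The key claim is that $G$ again satisfies the hypothesis of the theorem: using $a\circ F^e_X=F^e_A\circ a$ and the relative Frobenius $X\to X\times_{A,F^e_A}A$, one identifies $F^e_{G,*}\MO_G$ with a direct summand of the restriction of $F^e_{X,*}\MO_X=\bigoplus_j M_j^{(e)}$ to a suitable infinitesimal thickening of $G$ (spreading out over $A$ and applying cohomology and base change where needed), so by Krull--Schmidt $F^e_{G,*}\MO_G$ is a direct sum of line bundles for infinitely many $e$. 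By the induction hypothesis $G$ is an ordinary abelian variety. Now I would exploit that $F^e_{X,*}\MO_X$ decomposes into line bundles on all of $X$, not merely fibrewise: the summands restricting to $G$ must be exactly the $p$-power-torsion line bundles of $\widehat G$ occurring in $F^e_{G,*}\MO_G$, so every such line bundle extends off $G$; this forces the monodromy of $X\to A$ to act trivially on $\widehat G[p^\infty]$, hence on $\widehat G$. With trivial monodromy, $X\to A$ is an isotrivial abelian-scheme fibration (a torsor under one), so $X$ is itself an abelian variety---contradicting $\dim\Alb(X)=\dim A<n$. Hence $\dim A=n$, and Step 2 concludes the proof.

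\textbf{Expected main obstacle.} The two substantial points are: (i) making Step 3's relative-Frobenius identification of $F^e_{G,*}\MO_G$ as a direct summand precise, with the attendant flatness, base-change and generic-smoothness bookkeeping; and, above all, (ii) extracting from the \emph{global} splitting of $F^e_*\MO_X$ the triviality of the monodromy of the Albanese fibration. Point (ii) is what actually distinguishes genuine ordinary abelian varieties from their finite quotients by non-translation actions (e.g.\ (quasi-)bielliptic surfaces) and from nontrivial abelian-scheme fibrations---all of which have $K\equiv 0$ and are $F$-split but fail property $(*)$---so I expect it to be the heart of the argument.
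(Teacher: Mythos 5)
Your Step 1 is correct and matches the paper (Lemma~\ref{F-split} and Lemma~\ref{K-torsion}), but from Step 2 onward the proposal has gaps I consider fatal. First, surjectivity of the Albanese morphism is not something ``one checks'' from $F$-splitness and $K_X\equiv 0$: the paper obtains it by verifying $\kappa_S(X)=0$ and invoking the Hacon--Patakfalvi generic vanishing theorem, a substantial external input. Second, and more seriously, the claim ``if $\dim A=n$ then $a$ is birational'' is false: a generically finite Albanese morphism can have degree $>1$, and in characteristic $p$ it can have a nontrivial purely inseparable part; your parenthetical assertion that $a$ is separable with connected fibres is precisely what must be proved. Ruling out the inseparable part is one of the two main difficulties the paper identifies (Step~\ref{p-insep}), and it genuinely uses the hypothesis that $F^e_*\MO_X$ splits into \emph{line bundles}, not merely $F$-splitness: one factors the Frobenius of an intermediate degree-$p$ quotient $W$ through $X$, shows via Lemma~\ref{determinant} that $\varphi_*\MO_X$ must be a direct sum of pullbacks of $p$-torsion line bundles from $A$, and derives a contradiction by computing $H^0$ of a nontrivial torsion line bundle. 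Nothing in your outline addresses this, and $F$-splitness plus $K_X\equiv 0$ alone cannot rule it out.

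Third, your inductive Step 3 replaces the paper's central counting argument with a fibration/monodromy argument that does not go through as stated: in characteristic $p$ the general fibre of $a$ need not be smooth or even reduced; the identification of $F^e_{G,*}\MO_G$ with a summand of a restriction of $F^e_{X,*}\MO_X$ requires the relative Frobenius over $A$ and is not carried out; and ``trivial monodromy on $\widehat G[p^\infty]$ implies $X$ is an abelian variety'' is unproved (the $p$-power torsion of an ordinary abelian variety sees only the \'etale half of the Tate module, and even granting isotriviality one must still control how the relevant finite group acts, as bielliptic-type quotients illustrate). The paper avoids all of this with a direct count: Lemma~\ref{CY-p-torsion} shows, via Grothendieck duality and $(p^e-1)K_X\sim 0$, that every summand $M_j^{(e)}$ satisfies $H^0(X,M_j^{\pm p^e})\neq 0$ and hence is $p^e$-torsion, so $X$ carries exactly $p^{e\dim X}$ distinct $p^e$-torsion line bundles; comparing with the asymptotics of Proposition~\ref{picard-exact} forces the $p$-rank of $\underline{\Pic}^0(X)_{\red}$ to equal $\dim X$, whence $\dim A=\dim X$ and $A$ is ordinary, with no induction on dimension at all. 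This duality-and-counting step is the key idea your proposal is missing, and without it (or a complete substitute) the argument does not close.
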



On the other hand, how about the opposite problem? 
More precisely, when does $X$ satisfy the following property $(**)$? 

\begin{enumerate}
\item[$(**)$]{$F_*\MO_X$ is indecomposable, that is, 
if $F_*\MO_X=E_1\oplus E_2$ holds for some coherent sheaves $E_1$ and $E_2$, then 
$E_1=0$ or $E_2=0$. }
\end{enumerate}

We study this problem for abelian varieties and curves.

\begin{thm}[Theorem~\ref{abel-main}]\label{0abel-main}
Let $k$ be an algebraically closed field of characteristic $p>0$. 
Let $X$ be an abelian variety over $k$. 
Set $r_X$ to be the $p$-rank of $X$. 
Then, for every $e\in\mathbb Z_{>0}$, 
$$F_*^e\MO_X \simeq E_1 \oplus \cdots \oplus E_{p^{er_X}}$$
where each $E_i$ is an indecomposable locally free sheaf of rank $p^{e(\dim X-r_X)}$. 
In particular, $F_*^e\MO_X$ is indecomposable if and only if $r_X=0$. 
\end{thm}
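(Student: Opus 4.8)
The plan is to realize $F^{e}$ as a composite of two torsors and to isolate the ``diagonalizable'' part. Write $g=\dim X$ and $G:=\Ker\bigl(F^{e}\colon X\to X^{(p^{e})}\bigr)$, a finite commutative infinitesimal group scheme of order $p^{eg}$; since $F^{e}$ is a faithfully flat isogeny, it is an fppf $G$-torsor, so $F^{e}_{*}\MO_{X}$ is locally free of rank $p^{eg}$ over $\MO_{X^{(p^{e})}}$. Let $M\subseteq G$ be the largest subgroup scheme of multiplicative type. Two facts I would record first: (i) using that the maximal sub-$p$-divisible group of multiplicative type inside $X[p^{\infty}]$ has height $r_{X}$, one gets $M\simeq\mu_{p^{e}}^{\,r_{X}}$, so $|M|=p^{er_{X}}$; (ii) $H:=G/M$ is infinitesimal of order $p^{e(g-r_{X})}$ and has no subgroup scheme of multiplicative type (such a subgroup would pull back to one in $G$ strictly containing $M$), hence $H$ is local-local, i.e. both $H$ and its Cartier dual $H^{\vee}$ are connected. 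The isogeny $F^{e}$ then factors as
\[
X\ \xrightarrow{\ q\ }\ X':=X/M\ \xrightarrow{\ \pi\ }\ X^{(p^{e})}=X'/H,
\]
where $q$ is an $M$-torsor, $\pi$ is an $H$-torsor, and $X'$ is again an abelian variety.

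Because $M\simeq\mu_{p^{e}}^{\,r_{X}}$ is diagonalizable with character group $\Lambda:=(\mathbb{Z}/p^{e})^{r_{X}}$, the $M$-torsor $q$ carries a $\Lambda$-grading $q_{*}\MO_{X}=\bigoplus_{\lambda\in\Lambda}L_{\lambda}$ with every $L_{\lambda}$ an invertible $\MO_{X'}$-module (and $L_{0}=\MO_{X'}$); note $|\Lambda|=p^{er_{X}}$. Pushing forward,
\[
F^{e}_{*}\MO_{X}=\pi_{*}q_{*}\MO_{X}=\bigoplus_{\lambda\in\Lambda}\pi_{*}L_{\lambda},
\]
a direct sum of $p^{er_{X}}$ locally free sheaves, each of rank $|H|=p^{e(\dim X-r_{X})}$. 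So it suffices to show each $\pi_{*}L_{\lambda}$ is indecomposable.

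To do this I would compute the endomorphism algebra $R_{\lambda}:=\operatorname{End}_{X^{(p^{e})}}(\pi_{*}L_{\lambda})$, which is a finite-dimensional $k$-algebra because $X^{(p^{e})}$ is proper; indecomposability of $\pi_{*}L_{\lambda}$ is equivalent to $R_{\lambda}$ being local. Trivializing the $H$-torsor over itself identifies $X'\times_{X^{(p^{e})}}X'$ with $X'\times_{k}H$ and the two projections with $\mathrm{pr}_{X'}$ and the action $a\colon X'\times_{k}H\to X'$, so $\pi^{*}\pi_{*}L_{\lambda}\simeq\mathrm{pr}_{X',*}\,a^{*}L_{\lambda}$. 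Since $a$ restricts to the identity on $X'\times\{0\}$ and $\Pic(H)=0$, the seesaw principle gives $a^{*}L_{\lambda}\simeq\mathrm{pr}_{X'}^{*}L_{\lambda}$, whence $\pi^{*}\pi_{*}L_{\lambda}\simeq L_{\lambda}\otimes_{k}A$ with $A:=\Gamma(H,\MO_{H})$, a finite local $k$-algebra. Combining the adjunction $\pi^{*}\dashv\pi_{*}$ with $\Gamma(X',\MO_{X'})=k$ (here properness of $X'$ is crucial, and is what fails for, say, $F_{*}\MO_{\mathbb{A}^1}$) yields $R_{\lambda}\simeq\Hom_{X'}(L_{\lambda}\otimes_{k}A,L_{\lambda})$, and keeping track of the torsor action shows that as a ring $R_{\lambda}$ is the group algebra $A^{\vee}=\Gamma(H^{\vee},\MO_{H^{\vee}})$. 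As $H^{\vee}$ is connected, this is a local Artinian $k$-algebra, so $R_{\lambda}$ is local and $\pi_{*}L_{\lambda}$ is indecomposable. I expect this last step---passing from the $k$-module isomorphism $R_{\lambda}\simeq A^{\vee}$ to a \emph{ring} isomorphism, together with the ``local-local'' property of $H$---to be the main obstacle.

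Finally, by the Krull--Schmidt theorem (valid for coherent sheaves on the proper scheme $X^{(p^{e})}$), $F^{e}_{*}\MO_{X}=\bigoplus_{\lambda\in\Lambda}\pi_{*}L_{\lambda}$ exhibits $F^{e}_{*}\MO_{X}$ as a direct sum of exactly $p^{er_{X}}$ indecomposable locally free sheaves of rank $p^{e(\dim X-r_{X})}$; in particular it is indecomposable if and only if $p^{er_{X}}=1$, that is, if and only if $r_{X}=0$.
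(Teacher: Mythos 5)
Your architecture is sound and in fact lands on exactly the same decomposition as the paper, approached from the Cartier-dual side. The paper factors the dual isogeny $\widehat{F^{e,{\rm rel}}_X}:\widehat{X^{(p^e)}}\to Y\to \hat X$ into a purely inseparable part followed by an \'etale part of degree $p^{er_X}$, dualizes back, and invokes two results of Oda: the Poincar\'e-bundle formula $f_*L\simeq {\rm pr}_{1*}(\mathcal P|_{\,\cdot\,\times \hat f^{-1}([L])})$ to split $F^e_*\MO_X$ over the $p^{er_X}$ points of the \'etale kernel, and the computation ${\rm End}({\rm pr}_{1*}(\mathcal P|_{\,\cdot\,\times S}))\simeq\Gamma(S,\MO_S)$ to get indecomposability of each summand. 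Your factorization $X\to X/M\to X^{(p^e)}$ is precisely the dual of that one (one has $\hat Y\simeq X/M$), and your two key computations are reformulations of Oda's two theorems: the character grading of $q_*\MO_X$ replaces the Poincar\'e-bundle splitting, and your claimed ring isomorphism ${\rm End}(\pi_*L_\lambda)\simeq\Gamma(H^\vee,\MO_{H^\vee})$ is Oda's Corollary~1.12, since the one-point fibre $g^{-1}([M_j])$ appearing there is a trivial $\Ker(g)$-torsor over $k$, hence isomorphic to $H^\vee$ as a scheme. What your route buys is transparency: the $p$-rank enters directly as the multiplicative part of $\Ker F^e$, and no Poincar\'e bundle is needed for the splitting. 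What it costs is that the step you yourself flag as the main obstacle---upgrading $R_\lambda\simeq A^\vee$ from a $k$-linear isomorphism to a ring isomorphism---is genuinely the crux and is left unproved: knowing only $\dim_k R_\lambda=|H|$ does not exclude idempotents. This is exactly the content the paper outsources to Oda, so either prove it (by tracking the $H$-coaction through the adjunction) or cite it. A cosmetic point: the theorem concerns the absolute Frobenius, so one should note, as the paper does, that the non-$k$-linear isomorphism $X^{(p^e)}\to X$ does not affect decomposability, rank, or local freeness.

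There is also one genuine error: the seesaw step. Knowing that $a^*L_\lambda$ and ${\rm pr}_{X'}^*L_\lambda$ agree on $X'\times\{0\}$ and that $\Pic(H)=0$ does \emph{not} imply they agree on $X'\times H$, because $H$ is non-reduced with a single closed point and seesaw requires fibrewise triviality over \emph{all} of the base, including its infinitesimal structure. Concretely, on $E\times\Spec k[\epsilon]$ a nonzero tangent vector of $\hat E$ at the origin yields a line bundle that is trivial on $E\times\{0\}$ and on every $\{x\}\times\Spec k[\epsilon]$, yet nontrivial; your asserted isomorphism amounts to $H\subset\Ker\phi_{L_\lambda}$, which is not automatic for an arbitrary line bundle. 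The fix is easy: the grading multiplications $L_\lambda\otimes L_\mu\to L_{\lambda+\mu}$ are isomorphisms for a torsor, so $L_\lambda^{\otimes p^e}\simeq L_0=\MO_{X'}$; hence $L_\lambda$ is numerically trivial, and on an abelian variety $\Pic^{\tau}=\Pic^0$, so $L_\lambda\in\Pic^0(X')$. Then $m^*L_\lambda\simeq{\rm pr}_1^*L_\lambda\otimes{\rm pr}_2^*L_\lambda$ restricted to $X'\times H$ gives $a^*L_\lambda\simeq{\rm pr}_{X'}^*L_\lambda$ because $L_\lambda|_H$ is trivial. With these two repairs the argument goes through and recovers the paper's proof.
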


\begin{thm}[Theorem~\ref{main-curve}]\label{0main2}
Let $k$ be an algebraically closed field of characteristic $p>0$. 
Let $X$ be a smooth projective curve of genus $g$. 
Fix an arbitrary integer $e\in\mathbb Z_{>0}$. 
Then the following assertions hold. 
\begin{enumerate}
\item[$(0)$]{If $g=0$, then $F^e_*\MO_X \simeq \bigoplus L_j$ where 
every $L_j$ is a line bundle.}
\item[$(1{\rm or})$]{If $g=1$ and $X$ is an ordinary elliptic curve, 
then $F^e_*\MO_X \simeq \bigoplus L_j$ where every $L_j$ is a line bundle.}
\item[$(1{ \rm ss})$]{If $g=1$ and $X$ is a supersingular elliptic curve, 
then $F^e_*\MO_X$ is indecomposable.}
\item[$(2)$]{If $g\geq 2$, then $F^e_*\MO_X$ is indecomposable.}
\end{enumerate}
\end{thm}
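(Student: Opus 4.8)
The plan is to settle $g=0$ and $g=1$ at once from results already available, and then to concentrate on $g\ge 2$, for which the goal is to show that $F^e_*\MO_X$ is \emph{simple}, i.e.\ $\operatorname{End}_X(F^e_*\MO_X)=k$; this already yields indecomposability, since a splitting $E=E_1\oplus E_2$ with $E_1,E_2\ne 0$ produces a non-scalar idempotent in $\operatorname{End}_X(E)$.

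For $(0)$, the morphism $F^e\colon \mathbb P^1\to\mathbb P^1$ is finite and flat and $\mathbb P^1$ is smooth, so $F^e_*\MO_{\mathbb P^1}$ is locally free, and every vector bundle on $\mathbb P^1$ is a direct sum of line bundles by Grothendieck's theorem (alternatively one may quote the toric case recalled in the introduction). For $(1\mathrm{or})$ and $(1\mathrm{ss})$, an elliptic curve is an abelian variety of dimension $1$ with $p$-rank $r_X=1$ if it is ordinary and $r_X=0$ if it is supersingular, so Theorem~\ref{0abel-main} gives directly $F^e_*\MO_X\simeq E_1\oplus\cdots\oplus E_{p^{e}}$ with each $E_i$ a line bundle, respectively $F^e_*\MO_X$ indecomposable.

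Now suppose $g\ge 2$. Since $F^e$ is finite flat and $X$ is smooth, $F^e_*\MO_X$ is locally free of rank $p^e$. Using the adjunction $F^{e*}\dashv F^e_*$ together with local freeness,
\[
\operatorname{End}_X\bigl(F^e_*\MO_X\bigr)\ \simeq\ \Hom_X\bigl(F^{e*}F^e_*\MO_X,\ \MO_X\bigr)\ =\ H^0\bigl(X,\ (F^{e*}F^e_*\MO_X)^{\vee}\bigr).
\]
The key input is the canonical filtration of $F^{e*}F^e_*\MO_X$: it is a successive extension of the line bundles $\omega_X^{\otimes l}$ for $l=0,1,\dots,p^e-1$, each occurring exactly once. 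For $e=1$ this is the classical Cartier-operator filtration; the general case follows by induction on $e$, writing $F^e$ as a composite of Frobenii, using $F^*\omega_X\cong\omega_X^{\otimes p}$, and noting that the exponents $pl+m$ with $0\le l\le p^{e-1}-1$ and $0\le m\le p-1$ range exactly over $\{0,\dots,p^e-1\}$. Dualizing, $(F^{e*}F^e_*\MO_X)^{\vee}$ is a successive extension of the $\omega_X^{\otimes(-l)}$, $l=0,\dots,p^e-1$. Since $g\ge 2$ we have $\deg\omega_X=2g-2>0$, so $H^0(X,\omega_X^{\otimes(-l)})=0$ for $l\ge 1$ while $H^0(X,\MO_X)=k$; chasing the short exact sequences of the filtration then gives $h^0\bigl(X,(F^{e*}F^e_*\MO_X)^{\vee}\bigr)\le 1$, and the identity endomorphism shows equality. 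Hence $\operatorname{End}_X(F^e_*\MO_X)=k$, and $F^e_*\MO_X$ is indecomposable.

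The only genuinely non-formal ingredient is the canonical filtration of $F^{e*}F^e_*\MO_X$ with its precise graded pieces $\omega_X^{\otimes l}$ ($l=0,\dots,p^e-1$), and this is the step I expect to cost the most work (or a careful citation): one must check both that exactly those powers appear — so that only the trivial piece survives after dualizing — and that the induction from $e=1$ to general $e$ is carried out correctly, tensoring the $e=1$ filtration of $F^*F_*\MO_X$ with the Frobenius pullback of the $(e-1)$-step filtration and reading off exponents in base $p$. As a sanity check, $\chi$ is preserved by the finite morphism $F^e$, giving $\deg F^e_*\MO_X=(g-1)(p^e-1)$, which matches $p^{-e}\sum_{l=0}^{p^e-1}l(2g-2)$, the degree forced by the filtration.
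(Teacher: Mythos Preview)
Your argument is correct. For $(0)$, $(1\mathrm{or})$, and $(1\mathrm{ss})$ you match the paper exactly: Grothendieck's splitting theorem on $\mathbb P^1$, and Theorem~\ref{0abel-main} applied to an elliptic curve with $r_X=1$ or $r_X=0$.

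For $(2)$ you take a genuinely different route. The paper simply quotes \cite[Theorem~2.2]{Sun}: since $\MO_X$ is a stable bundle on a curve of genus $\geq 2$, so is $F^e_*\MO_X$, and stable bundles are indecomposable. You instead prove the sharper statement that $F^e_*\MO_X$ is \emph{simple}, by using adjunction to identify $\operatorname{End}(F^e_*\MO_X)$ with $H^0\bigl((F^{e*}F^e_*\MO_X)^\vee\bigr)$ and then reading off the answer from the canonical (Cartier) filtration of $F^{e*}F^e_*\MO_X$ with graded pieces $\omega_X^{\otimes l}$, $0\le l\le p^e-1$. The two approaches share the same technical core---Sun's stability proof is itself built on exactly this filtration---so you are essentially unpacking the part of \cite{Sun} that is actually needed here. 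What your version buys is a self-contained argument that never mentions slopes or stability and lands directly on $\operatorname{End}=k$; what the paper's version buys is a one-line proof and the strictly stronger conclusion of stability, at the cost of outsourcing all the content to the citation. Your induction on $e$ (pulling back the $(e-1)$-step filtration by $F$ and using $F^*\omega_X\simeq\omega_X^{\otimes p}$ to get exponents in base $p$) is correct, and the degree sanity check is a nice confirmation.
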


By Theorem~\ref{0main2}(2), it is natural to ask whether 
$F_*\MO_X$ is indecomposable for every smooth projective variety of general type $X$. 
If we drop the assumption that $X$ is smooth, then 
the following theorem gives a negative answer to this question.

\begin{thm}\label{0general-type}
Let $k$ be an algebraically closed field of characteristic $p>0$. 
Then, there exists a projective normal surface $X$ over $k$ which satisfies the following properties. 
\begin{enumerate}
\item{The singularities of $X$ are at worst canonical.}
\item{$K_X$ is ample.}
\item{$F_*\MO_X$ is not indecomposable.}
\end{enumerate}
\end{thm}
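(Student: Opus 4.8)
The plan is to reduce condition~(3) to a statement about the Frobenius push-forward on the smooth locus, and then to construct a minimal surface of general type carrying a suitable configuration of $(-2)$-curves across which that push-forward splits.

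First I would record the reduction. If $X$ is a normal projective surface with at worst canonical (equivalently, rational double point) singularities, then $\MO_X$ is Cohen--Macaulay; since the absolute Frobenius is finite and a homeomorphism, $F_*\MO_X$ is then a torsion-free $S_2$, hence reflexive, sheaf, so $\mathcal{H}om_{\MO_X}(F_*\MO_X,F_*\MO_X)$ is reflexive too. Therefore the indecomposability of $F_*\MO_X$ is detected on the regular locus $U=X\setminus\Sing X$: one has $F_*\MO_X|_U=F_*\MO_U$, and $F_*\MO_X$ is decomposable if and only if $F_*\MO_U$ is a decomposable locally free sheaf on $U$. Writing $f\colon \widetilde X\to X$ for the minimal resolution, with exceptional locus $Z$ a disjoint union of $ADE$ configurations of $(-2)$-curves and $K_{\widetilde X}=f^*K_X$, we have $U\cong\widetilde X\setminus Z$ and $F_*\MO_X=f_*\big(F_*\MO_{\widetilde X}\big)$. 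So it is enough to exhibit a minimal surface of general type $\widetilde X$ and a nonempty such configuration $Z\subset\widetilde X$ for which $F_*\MO_{\widetilde X}\big|_{\widetilde X\setminus Z}$ is decomposable; the canonical model $X=\Proj\bigoplus_{m\ge 0}H^0(\widetilde X,\MO_{\widetilde X}(mK_{\widetilde X}))$ then automatically has ample canonical class and rational double points, giving~(1) and~(2), while~(3) is what we arranged.

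The next point is that the splitting we want is forced to be of a restricted shape. One cannot split off $\MO_X$ from $F_*\MO_X$: since $X$ is Gorenstein, $\mathcal{H}om_{\MO_X}(F_*\MO_X,\MO_X)\cong F_*\big(\MO_X((1-p)K_X)\big)$ by duality, so a Frobenius splitting of $X$ would force $H^0(X,\MO_X((1-p)K_X))\ne 0$, i.e. $-K_X$ effective, contradicting ampleness of $K_X$. Hence the decomposition of $F_*\MO_{\widetilde X}|_{\widetilde X\setminus Z}$ must be $E_1\oplus E_2$ with $\MO$ a direct summand of neither factor; equivalently one must produce a nonscalar idempotent in $H^0\big(\widetilde X\setminus Z,\ \mathcal{H}om(F_*\MO_{\widetilde X},F_*\MO_{\widetilde X})\big)=\varinjlim_{n}H^0\big(\widetilde X,\ \mathcal{H}om(F_*\MO_{\widetilde X},F_*\MO_{\widetilde X})(nZ)\big)$ which does not extend to a global section on $\widetilde X$. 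The idea is to build this idempotent from the local picture along $Z$: near a rational double point $\Spec R$ the module $F_*R$ decomposes, by finite Cohen--Macaulay type and the McKay correspondence, into a direct sum of indecomposable maximal Cohen--Macaulay $R$-modules, so in a neighbourhood of $Z$ the sheaf $F_*\MO_{\widetilde X}$ splits ``by McKay type''. The task is then to choose $\widetilde X$ so that these local summands assemble into global reflexive (Weil-divisorial) sheaves and the resulting projector survives over all of $\widetilde X\setminus Z$ — one natural family of candidates being cyclic covers, of degree prime to $p$, of rational (e.g.\ toric) surfaces branched along a divisor through the prescribed nodes, where Frobenius push-forward on the base is completely understood.

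The hard part is precisely this last step: one needs simultaneously a \emph{vanishing} --- that the only obstruction to globalizing the local projector over $\widetilde X\setminus Z$ vanishes, so the projector genuinely exists there --- and a \emph{non-vanishing} --- that it fails to extend across $Z$ (equivalently, $X$ is not Frobenius split), which is automatic from the duality computation above once $K_X$ is ample, together with the check that the projector is nonscalar. The local structure theory of maximal Cohen--Macaulay modules over rational double points controls the behaviour along $Z$, but pinning down a surface of general type whose global cohomology makes the globalization work --- uniformly in $p$, with separate attention to the small primes $p=2,3$ where the list of rational double points and the shape of $F_*R$ change --- is where the construction really has to be engineered, and is what determines the choice of $\widetilde X$.
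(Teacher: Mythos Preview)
Your proposal is not a proof; it is a plan whose decisive step is explicitly left undone. Everything up to the phrase ``The hard part is precisely this last step'' is correct and shows good understanding: the reduction of (in)decomposability of $F_*\MO_X$ to the regular locus via reflexivity is valid, and the duality argument showing that $\MO_X$ cannot split off when $K_X$ is ample is right. But after that you never actually produce a surface. You suggest ``cyclic covers, of degree prime to $p$, of rational (e.g.\ toric) surfaces'' as \emph{candidates}, and then say that pinning one down ``is where the construction really has to be engineered''. That engineering is the entire content of the theorem; without it nothing has been proved. Moreover, the mechanism you sketch --- local splittings of $F_*R$ at rational double points via finite Cohen--Macaulay type, to be glued over $U$ --- faces an obstruction you yourself identified: the obvious local summand is $R$ itself, and those copies cannot assemble to a global $\MO_X$ because $X$ is not $F$-split. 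You give no reason why any other local summand should globalize.

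The paper's proof takes a completely different and far more direct route. It does not resolve, does not look at local Cohen--Macaulay modules, and does not attempt any gluing. Instead it builds $X$ globally as a purely inseparable degree-$p$ cover $\pi:X\to S$ of an \emph{ordinary abelian surface} $S$ (an $\alpha_{\mathcal L}$-torsor for a very ample $\mathcal L$, using Liedtke's result that for general data $X$ is normal with at worst $A_{p-1}$-singularities). Ampleness of $K_X$ follows from $K_X\sim(p-1)\pi^*\mathcal L$. The splitting of $F_*\MO_X$ is then a one-line consequence of the factorisation of Frobenius: since $\deg\pi=p$, the absolute Frobenii factor as $F_S:S\to X\xrightarrow{\pi}S$ and $F_X:X\xrightarrow{\pi}S\xrightarrow{\varphi}X$; because $S$ is ordinary, hence $F$-split, $\pi_*\MO_X\simeq\MO_S\oplus E$, and pushing forward by $\varphi$ gives $(F_X)_*\MO_X\simeq\varphi_*\MO_S\oplus\varphi_*E$. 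The decomposition is global from the outset; no local-to-global argument is needed.
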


\begin{rem}
By \cite{Sun}, 
if $X$ is a smooth projective curve of genus $g\geq 2$, 
then  $F_*E$ is a stable vector bundle whenever so is $E$. 
Theorem~\ref{0general-type} shows that 
there exists a projective normal canonical surface of general type $X$ 
such that $F_*\MO_X$ is not a stable vector bundle with respect to 
an arbitrary ample invertible sheaf $H$ on $X$. 
\end{rem}

{\bf Proof of Theorem~\ref{0main1}:} 
We overview the proof of Theorem~\ref{0main1}. 
First of all, we can show that $X$ is $F$-split, that is, 
$\MO_X \to F_*\MO_X$ splits as an $\MO_X$-module homomorphism. 
This implies 
$$H^0(X, -(p-1)K_X)\neq 0.$$ 
Since $K_X$ is pseudo-effective, we obtain $(p-1)K_X \sim 0$. 
Then, by \cite{HP}, we see that the Albanese map $\alpha:X \to \Alb(X)$ is surjective. 
There are two main difficulties as follows. 
\begin{enumerate}
\item{To show that $\alpha$ is generically finite.}
\item{To treat the case where $\alpha$ is a finite surjective inseparable morphism.}
\end{enumerate}

(1) 
Let us overview how to show that $\alpha$ is generically finite. 
Set $r_X$ to be the $p$-rank of $\Alb(X)$. 
It suffices to show $\dim X=r_X$. 
Note that $\alpha:X \to \Alb(X)$ induces the following bijective group homomorphism: 
$$\alpha^*:\Pic^0(\Alb(X)) \overset{\simeq}\to \Pic^0(X),\,\,\, L\mapsto \alpha^*L.$$
Roughly speaking, 
since $\Pic^{\tau}(X)/\Pic^0(X)$ is a finite group, 
$r_X$ can be calculated by the asymptotic behavior of the number of $p^e$-torsion line bundles on $X$. 
Thus, we count the number of $p^e$-torsion line bundles on $X$. 
More precisely, we prove that the number of $p^e$-torsion line bundles on $X$ is $p^{e\dim X}$ 
for infinitely many $e$. 
 

Now we have 
$$F^e_*\MO_X=\bigoplus_{1\leq j\leq p^{e\dim X}} M_j$$
where each $M_j$ is a line bundle. 
In our situation, 
we can show that every $p^e$-torsion line bundle $L$ 
is isomorphic to some $M_j$ (cf. Lemma~\ref{p-torsion}). 
Therefore, it suffices to prove that each $M_j$ is $p^e$-torsion. 
Tensor $M_j^{-1}$ with the above equation and take $H^0$. 
Then, we obtain $H^0(X, M_j^{-p^e})\neq 0$. 
If we have $H^0(X, M_j^{p^e})\neq 0$, then we are done. 
For this, we take the duality, that is, 
apply $\mathcal Hom_{\MO_X}(-, \omega_X)$ to the above direct summand decomposition. 
Then we can also show $H^0(X, M_j^{p^e})\neq 0$. 
For more details on this argument, see Lemma~\ref{CY-p-torsion}. 

(2) 
We overview how to treat the inseparable case. 
To clarify the idea, we assume that 
$\alpha$ is a finite surjective purely inseparable morphism of degree $p$. 
Then, Frobenius map $F_A$ of $A$ factors through $\alpha$: 
$$F_A:A \to X \overset{\alpha}\to A.$$
By using the fact that $X$ is $F$-split, 
we can show that 
$$(F_A)_*\MO_A \simeq \alpha_*\MO_X\oplus E$$
for some coherent sheaf $E$. 
Since $(F_A)_*\MO_A$ is the direct sum of the $p$-torsion line bundles, 
we obtain 
$$\alpha_*\MO_X\simeq \bigoplus_{j=1}^p M_j$$
where $M_1, \cdots, M_p$ are mutually distinct $p$-torsion line bundles. 
One of them, say $M_1$, satisfies $\alpha^*M_1 \not\simeq \MO_X$. 
By tensoring $M_1^{-1}$, 
we obtain 
$$\alpha_*(\alpha^*M_1^{-1})\simeq \MO_A \oplus \bigoplus_{j=2}^p (M_j\otimes_{\MO_A}M_1^{-1})$$
which induces the following contradiction: 
$$0=H^0(X, \alpha^*M_1^{-1}) \simeq H^0(A, \MO_A)\oplus H^0(A, \bigoplus_{j=2}^p (M_j\otimes_{\MO_A}M_1^{-1}))\neq 0.$$
In the proof of Theorem~\ref{0main1}, there appear other technical difficulties. 
For more details on the inseparable case, see Step~\ref{p-insep} of the proof of Theorem~\ref{main-theorem2}. 


{\bf Related results:}
(1) 
In \cite{HP}, Hacon and Patakfalvi give a characterization 
of the varieties birational to ordinary abelian varieties. 

(2) \cite{Achinger} gives a characterization of smooth projective toric varieties as follows. 
For a smooth projective variety $X$ in positive characteristic, 
$X$ is toric if and only if 
$F_*L$ splits into line bundles for every line bundle $L$.

\begin{ack}
The authors would like to thank Professors 
Piotr Achinger, Yoshinori Gongyo, Nobuo Hara, Masayuki Hirokado, 
Kazuhiko Kurano, and Shunsuke Takagi, Mingshuo Zhou for several useful comments. 
We are grateful to the referee for valuable comments. 
The first author is partially supported by 
the Grant-in-Aid for JSPS Fellows (24-0745).
\end{ack}

\section{Preliminaries}\label{Preliminaries}

\subsection{Notation}\label{Notation}


We will not distinguish the notations line bundles, invertible sheaves and Cartier divisors. 
For example, we will write $L+M$ for line bundles $L$ and $M$. 

Throughout this paper, 
we work over an algebraically closed field $k$ of characteristic $p>0$. 
For example, a projective scheme means 
a scheme which is projective over $k$. 

Let $X$ be a noetherian scheme. 
For a coherent sheaf $F$ on $X$ and a line bundle $L$ on $X$, 
we define $F(L):=F\otimes_{\MO_X} L$. 

In this paper, a {\em variety} means 
an integral scheme which is separated and of finite type over $k$. 
A {\em curve} or a {\em surface} means a variety 
whose dimension is one or two, respectively. 

For a proper scheme $X$, 
let $\Pic(X)$ be the group of line bundles on $X$ and 
let $\Pic^0(X)$ (resp. $\Pic^{\tau}(X)$) be the subgroup of $\Pic(X)$ 
of line bundles which are algebraically (resp. numerically) equivalent to zero: 
$$\Pic^0(X) \subset \Pic^{\tau}(X) \subset \Pic(X).$$

For a normal variety $X$ and a coherent sheaf $M$ on $X$, 
we say $M$ is {\em reflexive} if the natural map 
$M \to \mathcal Hom_{\MO_X}(\mathcal Hom_{\MO_X}(M, \MO_X), \MO_X)$ 
is an isomorphism. 
We say $M$ is {\em divisorial} if $M$ is reflexive and 
$M|_{\MO_{X, \xi}}$ is rank one where $\xi$ is the generic point. 
It is well-known that a divisorial sheaf $M$ is isomorphic to the sheaf $\MO_X(D)$ 
associated to a Weil divisor $D$ on $X$.

Let $X$ be a scheme of finite type over $k$. 
We say $X$ is $F$-{\em split} if the absolute Frobenius 
$$\MO_X \to F_*\MO_X,\,\,\, a\mapsto a^p$$
splits as an $\MO_X$-module homomorphism. 
 
We say a coherent sheaf $F$ is {\em indecomposable} 
if, for every isomorphism $F\simeq F_1\oplus F_2$ with coherent sheaves $F_1$ and $F_2$, 
we obtain $F_1=0$ or $F_2=0$.

We recall the definition of ordinary abelian varieties. 

\begin{dfnprop}
Let $X$ be an abelian variety. 
We say $X$ is {\em ordinary} if one of the following conditions hold. 
Moreover, the following conditions are equivalent. 
\begin{enumerate}
\item{For some $e\in\mathbb Z_{>0}$, the number of $p^e$-torsion points is $p^{e\cdot\dim X}$.}
\item{For every $e\in\mathbb Z_{>0}$, the number of $p^e$-torsion points is $p^{e\cdot\dim X}$.}
\item{$F:H^1(X, \MO_X) \to H^1(X, \MO_X)$ is bijective.}
\item{$F:H^i(X, \MO_X) \to H^i(X, \MO_X)$ is bijective for every $i\geq 0$.}
\item{$X$ is $F$-split.}
\end{enumerate}
\end{dfnprop}

\begin{proof}
(1) and (2) are equivalent by \cite[Section~15, The $p$-rank]{Mumford}. 
(2) and (3) are equivalent by \cite[Section~15, Theorem~3]{Mumford}. 
(Note that, in older editions of \cite{Mumford}, there are two Theorem~2 in Section~15.) 
(3) and (4) are equivalent by \cite[Example~5.4]{MuSr}. 
(4) and (5) are equivalent by \cite[Lemma~1.1]{MeSr}. 
\end{proof}

\subsection{Albanese varieties}\label{Albanese}

In this subsection, we recall the definition and fundamental properties 
of the Albanese varieties. 
For details, see \cite[Section~9]{FGAex}. 




For a projective normal variety $X$ and a closed point $x\in X$, 
there uniquely exists a morphism $\alpha_X:X \to \Alb(X)$ 
to an abelian variety $\Alb(X)$, called the {\em Albanese variety} of $X$, 
such that $\alpha_X(x)=0$ and that 
every morphism to an abelian variety $g:X \to B$, with $g(x)=0_B$, 
factors through $\alpha_X$ (cf. \cite[Remark~9.5.25]{FGAex}). 
Note that $\Alb(X) \simeq \underline{\Pic}^0(\underline{\Pic}^0(X)_{\red})$, 
where $\underline{\Pic}(X):=\mathbf{Pic}_{X/k}$ in the sense of \cite[Section~9]{FGAex}. 

The Albanese morphism $\alpha_X:X \to \Alb(X)$ induces a natural morphism 
$$\alpha_X^*:\underline{\Pic}^0(\Alb(X)) \to \underline{\Pic}^0(X)_{\red}.$$
It is well-known that $\alpha_X^*$ is an isomorphism. 
In particular, the induced group homomorphism 
$$\alpha_X^*:\Pic^0(\Alb(X)) \to \Pic^0(X)$$ 
is bijective.

\subsection{The number of $p^e$-torsion line bundles}

The asymptotic behavior of the number of $p^e$-torsion line bundles 
is determined by the $p$-rank of the Picard variety $\Pic^0(X)_{\red}$. 

\begin{prop}\label{picard-exact}
Let $X$ be a projective normal variety. 
Then, the following assertions hold. 
\begin{enumerate}
\item{There exists the following exact sequence 
$$0 \to \Pic^0(X) \to \Pic^{\tau}(X) \to G(X) \to 0$$
where $G(X)$ is a finite group. }
\item{If $r_X$ is the $p$-rank of $\underline{\Pic}^0(X)_{\red}$, 
then there exists $\xi \in\mathbb Z_{>0}$ such that 
$$p^{er_X} \leq |\Pic(X)[p^e]| \leq p^{er_X}\times \xi$$
for every $e\in\mathbb Z_{>0}$ where 
$\Pic(X)[p^e]$ is the group of $p^e$-torsion line bundles. }
\end{enumerate}
\end{prop}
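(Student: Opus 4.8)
The plan is to reduce both statements to standard facts about the Picard scheme together with the definition of the $p$-rank of an abelian variety.

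\textbf{Part (1).} Since $X$ is projective and normal over the algebraically closed field $k$, the Picard functor $\underline{\Pic}_{X/k}$ is representable by a $k$-scheme, locally of finite type, with $\Pic(X)=\underline{\Pic}_{X/k}(k)$ (cf. \cite[Section~9]{FGAex}); its identity component is $\underline{\Pic}^0_{X/k}$ and $\underline{\Pic}^{\tau}_{X/k}$ is of finite type over $k$. Hence $\underline{\Pic}^{\tau}_{X/k}$ has finitely many connected components, so $\underline{\Pic}^{\tau}_{X/k}/\underline{\Pic}^0_{X/k}$ is a finite group scheme; passing to $k$-points, $G(X):=\Pic^{\tau}(X)/\Pic^0(X)$ is a finite group, which is exactly the asserted exact sequence. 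I would simply cite these representability and finiteness statements — essentially the theorem of the base, together with the fact that $\Pic^{\tau}(X)/\Pic^0(X)$ is the torsion subgroup of the finitely generated group $\Pic(X)/\Pic^0(X)$ — rather than reprove them.

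\textbf{Part (2).} First I would observe that every $p^e$-torsion line bundle is automatically numerically trivial: if $p^eL\sim 0$ then $p^e(L\cdot C)=0$, hence $L\cdot C=0$, for every curve $C\subset X$. Thus $\Pic(X)[p^e]=\Pic^{\tau}(X)[p^e]$, and it suffices to bound $|\Pic^{\tau}(X)[p^e]|$. Applying the left-exact $p^e$-torsion functor to the exact sequence of Part (1) gives an exact sequence
$$0 \longrightarrow \Pic^0(X)[p^e] \longrightarrow \Pic^{\tau}(X)[p^e] \longrightarrow G(X)[p^e],$$
so that
$$|\Pic^0(X)[p^e]|\ \leq\ |\Pic^{\tau}(X)[p^e]|\ \leq\ |\Pic^0(X)[p^e]|\cdot|G(X)[p^e]|\ \leq\ |\Pic^0(X)[p^e]|\cdot|G(X)|,$$
the last step because $G(X)$ is finite.

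It then remains to compute $|\Pic^0(X)[p^e]|$. Since $X$ is projective and normal, $A:=\underline{\Pic}^0(X)_{\red}$ is an abelian variety over $k$ and $\Pic^0(X)=A(k)$; by the definition of the $p$-rank $r_X$ of $A$, one has $|A(k)[p^e]|=p^{er_X}$ for every $e\in\mathbb Z_{>0}$. Substituting this into the displayed inequalities and setting $\xi:=|G(X)|$ yields $p^{er_X}\leq|\Pic(X)[p^e]|\leq p^{er_X}\cdot\xi$ for all $e$, as required. I do not expect a serious obstacle here: the only non-formal ingredient is the finiteness in Part (1), and the rest is elementary diagram-chasing plus the definition of the $p$-rank. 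The one point to keep an eye on is that the representability and finite-type properties of $\underline{\Pic}_{X/k}$ invoked above hold for $X$ merely normal, not necessarily smooth — which is precisely why the statement assumes $X$ projective normal.
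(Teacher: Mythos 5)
Your proof is correct and follows the same route as the paper: part (1) is exactly the finiteness statement of \cite[9.6.17]{FGAex} (the torsion of the N\'eron--Severi group is finite), and your deduction of (2) — identifying $\Pic(X)[p^e]$ with $\Pic^{\tau}(X)[p^e]$, sandwiching it between $|\Pic^0(X)[p^e]|=p^{er_X}$ and $p^{er_X}\cdot|G(X)|$ via the exact sequence, and taking $\xi=|G(X)|$ — is precisely the argument the authors leave implicit when they write that (2) follows from (1).
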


\begin{proof}
The assertion (1) holds by \cite[9.6.17]{FGAex}. 
The assertion (2) follows from (1). 
\end{proof}

As a consequence, we see that the $p$-rank of the Picard variety 
is stable under purely inseparable covers.  

\begin{prop}\label{p-rank-insep}
Let $f: X \to Y$ be a finite surjective purely inseparable morphism between 
projective normal varieties. 
Set $r_X$ and $r_Y$ to be the $p$-ranks of $\underline{\Pic}^0(X)_{\red}$ and $\underline{\Pic}^0(Y)_{\red}$, respectively. 
Then, $r_X=r_Y$. 
\end{prop}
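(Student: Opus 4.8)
The plan is to reduce the statement to the behavior of $p^e$-torsion line bundles under the cover $f$. Since $f: X \to Y$ is finite, surjective, and purely inseparable between projective normal varieties, some power of the Frobenius of $Y$ factors through $f$; more precisely, there exists a finite surjective purely inseparable morphism $g: Y \to X$ (after possibly replacing $f$ by a higher power of Frobenius, one can arrange the composition both ways) so that $g \circ f = F_X^n$ and $f \circ g = F_Y^n$ for some $n$. First I would record this factorization: the point is that the pullback maps $f^*$ and $g^*$ on Picard groups have compositions equal to $(F^n)^* = $ multiplication by $p^n$ (on degree-zero components, Frobenius acts as $p$ on the group law of the Picard scheme). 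Hence $f^*: \Pic^0(Y) \to \Pic^0(X)$ has kernel and cokernel killed by $p^n$; in particular $f^*$ induces an isogeny $\underline{\Pic}^0(Y)_{\red} \to \underline{\Pic}^0(X)_{\red}$, and an isogeny of abelian varieties preserves the $p$-rank. This already gives $r_X = r_Y$ along the cleanest route.

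If I instead want to argue via the counting characterization, the key steps are: (i) use Proposition~\ref{picard-exact}(2), which says $p^{er_X} \leq |\Pic(X)[p^e]| \leq p^{er_X}\xi_X$ for a constant $\xi_X$ independent of $e$, and similarly for $Y$; (ii) show that $f^*: \Pic(Y)[p^e] \to \Pic(X)[p^e]$ has kernel and cokernel of size bounded independently of $e$. For the kernel: if $f^*L \simeq \MO_X$ then $L^{p^n} \simeq g^*f^*L \simeq \MO_Y$, so $\ker(f^*)$ on all of $\Pic$ is contained in $\Pic(Y)[p^n]$, a fixed finite group. For the image: given a $p^e$-torsion line bundle $M$ on $X$, the line bundle $g^*M$ on $Y$ is $p^e$-torsion and $f^*(g^*M) = M^{p^n}$, so the image of $f^*$ on $\Pic(X)[p^e]$ contains the $p^n$-th powers, whose index in $\Pic(X)[p^e]$ is again bounded (by $p^{n\cdot\dim}$ or, more carefully, by a constant coming from the structure of the finite abelian group $\Pic(X)[p^e]$ together with Proposition~\ref{picard-exact}). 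Combining, $|\Pic(X)[p^e]|$ and $|\Pic(Y)[p^e]|$ differ by a bounded multiplicative factor, so by the squeeze in Proposition~\ref{picard-exact}(2) their exponential growth rates agree, i.e. $r_X = r_Y$.

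The main obstacle, I expect, is the bookkeeping in step (ii) — specifically, controlling the \emph{cokernel} of $f^*$ on the torsion subgroups uniformly in $e$. The kernel is easy since it sits inside a fixed finite group, but to bound the index of $\Image(f^*)$ one must argue that the $p^n$-th power map on the finite abelian $p$-group $\Pic(X)[p^e]$ has image of index bounded independently of $e$; this requires knowing that the number of cyclic factors of $\Pic(X)[p^e]$ is bounded, which is exactly the content of Proposition~\ref{picard-exact}(1) (the group $G(X)$ is finite, so the torsion all essentially comes from the fixed-dimensional abelian variety $\Pic^0(X)_{\red}$, whose $p^e$-torsion has at most $2\dim$ cyclic factors). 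Given the isogeny viewpoint of the first paragraph this is automatic, so in the write-up I would lead with the Frobenius factorization and the isogeny argument, and only fall back on the counting estimates if a more elementary presentation is preferred.
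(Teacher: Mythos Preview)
Your proposal is correct, and your primary route is genuinely different from the paper's. Both arguments begin from the same observation: the Frobenius factorization $g\circ f = F_X^n$ and $f\circ g = F_Y^n$ for some $n$. From there the paper takes the counting path. It reduces to $[K(X):K(Y)]=p$, notes that the existence of a purely inseparable $g:Y\to X$ makes it enough to prove one inequality $r_Y\le r_X$, and then establishes that inequality concretely: given $p^{er_Y}$ distinct $p^e$-torsion line bundles $L_j\in\Pic^0(Y)$, it lifts them to $p$-th roots $M_j$ in the abelian variety $\underline{\Pic}^0(Y)_{\red}$, pushes across by $f^*$, and checks that the $f^*M_j$ are $p^{e+1}$-torsion and remain distinct because $g^*f^*M_j = M_j^p = L_j$. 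Combined with Proposition~\ref{picard-exact}(2) this yields $p^{er_Y}\le p^{(e+1)r_X}\cdot\xi$, and letting $e\to\infty$ gives $r_Y\le r_X$.

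Your isogeny argument short-circuits all of this: from $f^*\circ g^* = [p^n]$ and $g^*\circ f^* = [p^n]$ on the reduced Picard varieties you read off directly that $f^*:\underline{\Pic}^0(Y)_{\red}\to\underline{\Pic}^0(X)_{\red}$ is an isogeny, and then invoke the standard fact that isogenous abelian varieties have the same $p$-rank. This is cleaner and avoids Proposition~\ref{picard-exact} altogether; the price is that it relies on a background fact about $p$-ranks under isogeny rather than being self-contained. Your fallback counting argument is close in spirit to the paper's, but organized as a kernel/cokernel bound on $f^*$ rather than as a direct injection of torsion classes; the paper's version is a bit slicker here because it sidesteps the cokernel bookkeeping you flag as the main obstacle, at the cost of only proving one inequality and appealing to symmetry for the other.
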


\begin{proof}
We may assume that $[K(X):K(Y)]=p$. 
Then, the absolute Frobenius morphism $F:Y \to Y$ factors through $f:X \to Y$: 
$$F:Y \overset{g}\to X \overset{f}\to Y.$$
Thus, it suffices to show $r_Y \leq r_X$. 

We show that the following inequality 
$$p^{er_Y} \leq |\Pic(X)[p^{e+1}]|$$
holds for every $e\in\mathbb Z_{>0}$. 
Fix $e\in\mathbb Z_{>0}$. 
Let $L_1, \cdots, L_{p^{er_Y}}$ be mutually distinct $p^e$-torsion line bundles in $\Pic^0(Y)$. 
Then, since $\underline{\Pic}^0(Y)_{{\red}}$ is an abelian variety, 
we can find line bundles $M_1, \cdots, M_{p^{er_Y}}$ such that 
$M_j^p\simeq L_j$ for every $1\leq j \leq p^{er_Y}$. 
We see that, for each $j$, 
$$L_j \simeq M_j^p =F^*M_j\simeq g^*f^*M_j$$
and that $f^*M_1, \cdots, f^*M_{p^{er_Y}}$ are mutually distinct $p^{e+1}$-torsion line bundles on $X$. 
Thus, we obtain the required inequality $p^{er_Y}\leq |\Pic(X)[p^{e+1}]|.$

By Proposition~\ref{picard-exact}(2), we can find $\xi \in\mathbb Z_{>0}$ such that 
the inequalities 
$$p^{er_Y} \leq |\Pic(X)[p^{e+1}]| \leq p^{(e+1)r_X}\times \xi$$
hold for every $e\in\mathbb Z_{>0}$. 
By taking the limit $e\to \infty$, we obtain $r_Y \leq r_X$. 
\end{proof}

\section{Basic properties}\label{Basic}

In the main theorem (Theorem~\ref{0main1}), 
we treat varieties 
such that $F_*^e\MO_X$ is decomposed into line bundles. 
In this section, we summarize basic properties of such varieties. 
Since such varieties are $F$-split (Lemma~\ref{F-split}), 
we also study $F$-split varieties. 
First, we give characterizations of $F$-split varieties.

\begin{lem}\label{F-split-lemma}
Let $X$ be a scheme of finite type over $k$. 
Then, the following assertions are equivalent. 
\begin{enumerate}
\item{$X$ is $F$-split.}
\item{For every $e \in \mathbb Z_{>0}$, 
there exists a coherent sheaf $E$ such that 
$F_*^e\MO_X\simeq \MO_X\oplus E$.}
\item{$F_*^e\MO_X\simeq \MO_X\oplus E$ for some $e\in\mathbb Z_{>0}$ and coherent sheaf $E$.}
\item{$F_*^e\MO_X\simeq L\oplus E$ for some $e\in\mathbb Z_{>0}$, $p^e$-torsion line bundle $L$ 
and coherent sheaf $E$.}
\end{enumerate}
\end{lem}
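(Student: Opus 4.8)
The plan is to prove the chain $(1)\Rightarrow(2)\Rightarrow(3)\Rightarrow(4)$ together with $(4)\Rightarrow(3)$ and $(3)\Rightarrow(1)$, which yields all the equivalences. Throughout, since $k$ is perfect, $F$ (hence every $F^e$) is a finite morphism, so $F^e_*\MO_X$ is a coherent $\MO_X$-module and so is every direct summand of it; this disposes of the coherence clauses without further comment. Write $\varphi_e\colon\MO_X\to F^e_*\MO_X$ for the iterated Frobenius homomorphism $a\mapsto a^{p^e}$, and recall the factorization $\varphi_e=F_*(\varphi_{e-1})\circ\varphi_1$ under the identification $F^e_*\MO_X=F_*(F^{e-1}_*\MO_X)$.

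For $(1)\Rightarrow(2)$: by induction on $e$, using this factorization, a retraction of $\varphi_1$ and one of $\varphi_{e-1}$ produce a retraction of $\varphi_e$; a split injection $\MO_X\hookrightarrow F^e_*\MO_X$ then gives $F^e_*\MO_X\simeq\MO_X\oplus E$. The implications $(2)\Rightarrow(3)$ (specialize $e=1$) and $(3)\Rightarrow(4)$ (take $L=\MO_X$, which is $p^e$-torsion) are immediate. For $(4)\Rightarrow(3)$: given $F^e_*\MO_X\simeq L\oplus E$ with $L^{\otimes p^e}\simeq\MO_X$, tensoring with $L^{-1}$ gives $\MO_X\oplus(E\otimes L^{-1})$ on the one hand, and on the other hand, by the projection formula together with $(F^e)^*(L^{-1})\simeq(L^{\otimes p^e})^{-1}\simeq\MO_X$, it gives $F^e_*\big((F^e)^*L^{-1}\big)\simeq F^e_*\MO_X$; hence $F^e_*\MO_X\simeq\MO_X\oplus(E\otimes L^{-1})$, which is $(3)$.

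The substance of the lemma is $(3)\Rightarrow(1)$. Fix an isomorphism $F^e_*\MO_X\simeq\MO_X\oplus E$, and let $j\colon\MO_X\hookrightarrow F^e_*\MO_X$ and $q\colon F^e_*\MO_X\twoheadrightarrow\MO_X$ be the inclusion of, and projection onto, the first summand, so $q\circ j=\mathrm{id}$. Put $x:=j(1)\in\Gamma(X,F^e_*\MO_X)$, so that $q(x)=1$. The point is that $F^e_*\MO_X$ carries a ring structure whose induced $\MO_X$-module structure is the one given by $\varphi_e$; hence multiplication by the section $x$ is an $\MO_X$-linear endomorphism $m_x$ of $F^e_*\MO_X$ (the linearity is the local identity $x\cdot(a^{p^e}m)=a^{p^e}(x\cdot m)$). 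Then $\psi:=q\circ m_x\colon F^e_*\MO_X\to\MO_X$ is $\MO_X$-linear with $\psi(1)=q(x\cdot 1)=q(x)=1$, where $1$ denotes the unit section of $F^e_*\MO_X$. Since $\varphi_e$ sends $1$ to $1$, the $\MO_X$-linear endomorphism $\psi\circ\varphi_e$ of $\MO_X$ takes $1$ to $1$ and is therefore $\mathrm{id}_{\MO_X}$; thus $\varphi_e$ splits. Composing such a retraction with $F_*(\varphi_{e-1})$ and using $\varphi_e=F_*(\varphi_{e-1})\circ\varphi_1$ shows $\varphi_1$ splits, i.e.\ $X$ is $F$-split.

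The step to watch is precisely $(3)\Rightarrow(1)$: the decomposition in $(3)$ is only an abstract isomorphism of $\MO_X$-modules, so its $\MO_X$-summand need not be the image of the Frobenius and $\varphi_e$ does not split for free. The device that resolves this — twisting the given projection $q$ by multiplication by the section $x=j(1)$, which exploits the algebra rather than merely the module structure of $F^e_*\MO_X$ — is the only non-formal ingredient; everything else in the cycle is routine.
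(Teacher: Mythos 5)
Your proof is correct, and for the one implication the paper actually writes out, $(4)\Rightarrow(3)$, you use exactly the same device (tensoring by $L^{-1}$ and the projection formula); the remaining equivalences among $(1)$--$(3)$ the paper simply declares well-known, and your arguments — including the twist by $m_{j(1)}$ to repair the fact that the abstract $\MO_X$-summand in $(3)$ need not contain the image of Frobenius — are the standard ones filling in that gap.
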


\begin{proof}
It is well-known that (1), (2) and (3) are equivalent. 
It is clear that (3) implies (4). 
We see that (4) implies (3) by tensoring $L^{-1}$ with $F_*^e\MO_X\simeq L\oplus E$. 

\end{proof}

We are interested in varieties such that $F_*^e\MO_X$ is decomposed into line bundles. 
By the following lemma, such varieties are $F$-split.

\begin{lem}\label{F-split}
Let $X$ be a proper normal variety. 
Assume that $F_*^e\MO_X\simeq \bigoplus_j M_j$ for some $e\in\mathbb Z_{>0}$ and 
divisorial sheaves $M_j$. 
Then, $X$ is $F$-split.
\end{lem}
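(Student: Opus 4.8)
The plan is to produce $\MO_X$ as an honest direct summand of $F^e_*\MO_X$ and then simply quote Lemma~\ref{F-split-lemma}. Since $X$ is proper over $k=\bar k$ we have $H^0(X,\MO_X)=k$, and because $F^e$ is finite (hence affine) this gives $H^0(X,F^e_*\MO_X)=k$; comparing with the hypothesis, $\bigoplus_j H^0(X,M_j)=k$, so there is a unique index $j_0$ with $H^0(X,M_{j_0})\neq 0$, all the other $H^0(X,M_j)$ being zero. The $j$-th component of the Frobenius homomorphism $\iota\colon\MO_X\to F^e_*\MO_X$ is an element of $\Hom_{\MO_X}(\MO_X,M_j)=H^0(X,M_j)$, hence vanishes for $j\neq j_0$; thus $\iota$ factors as $\MO_X\xrightarrow{\iota_{j_0}}M_{j_0}\hookrightarrow F^e_*\MO_X$, and it suffices to prove that $\iota_{j_0}$ is an isomorphism.

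I would verify this on the regular locus $U:=X_{\mathrm{reg}}$, whose complement has codimension $\geq 2$ because $X$ is normal, and over which $F^e_*\MO_U$ is locally free (by Kunz's theorem $F^e$ is flat over a regular scheme, hence finite flat) and each $M_j|_U$ is invertible. The key observation is that $\iota|_U$ is a sub-bundle inclusion: for $x\in U$ the submodule $\mathfrak m_x\cdot(F^e_*\MO_X)_x$ is the Frobenius power ideal $\mathfrak m_x^{[p^e]}$ (generated by $p^e$-th powers of elements of $\mathfrak m_x$), which is properly contained in $\MO_{X,x}$ and so does not contain the unit $1=\iota(1)$; hence $\iota$ is fibrewise injective at $x$, and a fibrewise injective map from an invertible sheaf to a locally free sheaf has locally free cokernel. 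Since $M_{j_0}|_U$ is a direct summand of $F^e_*\MO_U$, the factorization above forces $\iota_{j_0}|_U\colon\MO_U\to M_{j_0}|_U$ to be a sub-bundle inclusion as well, and a comparison of ranks shows it is an isomorphism. As $M_{j_0}$ is reflexive on the normal variety $X$ and agrees with $\MO_X$ on the big open set $U$, we conclude $M_{j_0}\cong\MO_X$.

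Therefore $F^e_*\MO_X\cong\MO_X\oplus\bigl(\bigoplus_{j\neq j_0}M_j\bigr)$, and Lemma~\ref{F-split-lemma} gives that $X$ is $F$-split. The only substantive step is the sub-bundle claim for $\iota|_U$; everything else is routine bookkeeping with reflexive sheaves, together with the triviality $1\notin\mathfrak m_x^{[p^e]}$.
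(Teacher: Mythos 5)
Your proof is correct, and although it follows the same skeleton as the paper's---locate a summand $M_{j_0}$ with a nonzero global section, prove $M_{j_0}\cong\MO_X$, then invoke Lemma~\ref{F-split-lemma}---the mechanism for the middle step is genuinely different. The paper writes $M_{j_0}\simeq\MO_X(E)$ with $E$ effective, tensors the decomposition by $\MO_X(-E)$, takes the double dual so that the projection formula exhibits $\MO_X$ as a summand of $F^e_*(\MO_X(-p^eE))$, and concludes $E=0$ from $H^0(X,\MO_X(-p^eE))\neq 0$ together with the effectivity of $E$: a purely global, divisorial argument. You instead track the image of $1$ under the Frobenius map $\iota\colon\MO_X\to F^e_*\MO_X$: since $H^0(X,F^e_*\MO_X)=k$ forces all components of $\iota$ with $j\neq j_0$ to vanish, $\iota$ factors through $M_{j_0}$, and the local identity $\mathfrak m_x\cdot(F^e_*\MO_X)_x=\mathfrak m_x^{[p^e]}\subseteq\mathfrak m_x\not\ni 1$ shows the resulting section of $M_{j_0}$ is nowhere vanishing on $X_{{\rm reg}}$; a line bundle with a nowhere-vanishing section is trivial, and reflexivity extends the isomorphism over the codimension-two complement. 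Your route buys a little more: it exhibits the Frobenius section itself as the trivialization of the distinguished summand, so the splitting of $\MO_X\to F^e_*\MO_X$ is visible directly rather than only through the equivalence $(1)\Leftrightarrow(3)$ of Lemma~\ref{F-split-lemma}, and it avoids the projection-formula-plus-double-dual manipulation. Conversely, the paper's route needs no fibrewise analysis; note also that your appeal to Kunz is unnecessary, since $F^e_*\MO_X$ is locally free on $X_{{\rm reg}}$ simply because it is there a direct sum of the invertible sheaves $M_j|_{X_{{\rm reg}}}$.
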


\begin{proof}
We obtain the following 
$$0\neq H^0(X, F_*^e\MO_X)\simeq \bigoplus_j H^0(X, M_j).$$
Therefore, we see 
$H^0(X, M_{j_0})\neq 0$ for some $j_0$. 
We have $M_{j_0}\simeq \MO_X(E)$ for some effective divisor $E$ on $X$. 
By Lemma~\ref{F-split-lemma}, 
it is enough to show $E=0$. 
Tensor $\MO_X(-E)$ with 
$$F_*^e\MO_X\simeq \bigoplus_j M_j\simeq \MO_X(E)\oplus(\bigoplus_{j\neq j_0} M_j)$$
and take the double dual. 
We obtain the following decomposition: 
$$F_*^e(\MO_X(-p^eE))\simeq \MO_X\oplus(\bigoplus_{j\neq j_0} (M_j\otimes_{\MO_X} \MO_X(-E))^{**}).$$
Thus, $H^0(X, \MO_X(-p^eE))\neq 0$. 
This implies $E=0$. 
\end{proof}

The following result gives an upper bound of the number of 
$p^e$-torsion line bundles for $F$-split varieties. 

\begin{lem}\label{p-torsion}
Let $X$ be a proper variety. 
Assume that $X$ is $F$-split. 
Fix $e\in\mathbb Z_{>0}$. 
Let $F_*^e\MO_X\simeq \bigoplus_{j\in J} M_j$ be a decomposition 
into indecomposable coherent sheaves $M_j$. 
Then, the following assertions hold. 
\begin{enumerate}
\item{Let $L$ be a line bundle with $L^{p^e}\simeq \MO_X$. 
Then, $L\simeq M_{j_1}$ for some $j_1\in J$.}
\item{Let $j_1, j_2\in J$ with $j_1\neq j_2$. 
If $M_{j_1}$ and $M_{j_2}$ are line bundles and 
satisfy $M_{j_1}^{p^e}\simeq \MO_X$ and $M_{j_2}^{p^e}\simeq \MO_X$, 
then $M_{j_1}\not\simeq M_{j_2}$.}
\item{The number of $p^e$-torsion line bundles on $X$ is 
at most $p^{e\cdot \dim X}$.}
\end{enumerate}
\end{lem}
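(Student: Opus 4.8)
The plan is to rely on the Krull--Schmidt property of the category of coherent sheaves on the proper $k$-scheme $X$: since $\Hom_{\MO_X}(\mathcal F,\mathcal G)$ is a finite-dimensional $k$-vector space for all coherent $\mathcal F,\mathcal G$, every coherent sheaf admits a decomposition into finitely many indecomposable summands, and any two such decompositions coincide up to permutation and isomorphism. In particular $J$ is finite, $\sum_{j\in J}\rank M_j=\rank F^e_*\MO_X=p^{e\dim X}$, and whenever a coherent sheaf isomorphic to $F^e_*\MO_X$ is written as $\mathcal N_1\oplus\mathcal N_2$ with $\mathcal N_1$ indecomposable, the summand $\mathcal N_1$ must be isomorphic to one of the $M_j$.

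For (1) I would use that $X$ is $F$-split. By Lemma~\ref{F-split-lemma} the sheaf $\MO_X$ is a direct summand of $F^e_*\MO_X$. Tensoring with $L$ and applying the projection formula together with $F^{e*}L\simeq L^{p^e}\simeq\MO_X$ gives $F^e_*\MO_X\otimes L\simeq F^e_*(L^{p^e})\simeq F^e_*\MO_X$; hence $L$ is a direct summand of a sheaf isomorphic to $F^e_*\MO_X$. As a line bundle $L$ is indecomposable, so by the uniqueness part of Krull--Schmidt $L\simeq M_{j_1}$ for some $j_1\in J$.

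For (2), suppose to the contrary that $M_{j_1}\simeq M_{j_2}=:M$ for some $j_1\neq j_2$. Then $M^{\oplus 2}$ is a direct summand of $F^e_*\MO_X$. Tensoring with $M^{-1}$ and arguing as above (now with $M^{p^e}\simeq\MO_X$), we find that $\MO_X^{\oplus 2}$ is a direct summand of $F^e_*\MO_X\otimes M^{-1}\simeq F^e_*\MO_X$. Taking global sections yields $\dim_k H^0(X,F^e_*\MO_X)\geq 2$. But $X$ is a proper variety over the algebraically closed field $k$, so $H^0(X,\MO_X)=k$ and therefore $\dim_k H^0(X,F^e_*\MO_X)=\dim_k H^0(X,\MO_X)=1$, a contradiction.

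Finally, for (3), by (1) each $p^e$-torsion line bundle $L$ is isomorphic to some $M_{j(L)}$, and by (2) the assignment $L\mapsto j(L)$ is injective on isomorphism classes: if $j(L)=j(L')$ then $L\simeq M_{j(L)}\simeq L'$. Since each such $M_{j(L)}$ has rank $1$, and the ranks of the $M_j$ are non-negative and sum to $p^{e\dim X}$, the number of $p^e$-torsion line bundles is at most $p^{e\dim X}$. I expect the only genuinely delicate point to be the uniqueness half of Krull--Schmidt used in (1) and in identifying the $M_j$ — namely that an indecomposable summand of a decomposition really is isomorphic to one of the fixed summands $M_j$, and not merely a summand of their direct sum; this rests on the endomorphism rings being semiperfect, which follows from their finite-dimensionality over $k$. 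Everything else is routine manipulation with the projection formula and the equality $\dim_k H^0(X,F^e_*\MO_X)=1$.
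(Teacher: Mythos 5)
Your proof is correct and follows essentially the same route as the paper's: the projection formula combined with $L^{p^e}\simeq\MO_X$ to show $F^e_*\MO_X\otimes L\simeq F^e_*\MO_X$, the $F$-splitting plus Krull--Schmidt (Atiyah) to identify $L$ with some $M_{j_1}$, and the equality $\dim_k H^0(X,F^e_*\MO_X)=\dim_k H^0(X,\MO_X)=1$ to rule out repeated torsion line-bundle summands. The only cosmetic difference is that the paper tensors with $L^{-1}$ rather than $L$, which is immaterial.
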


\begin{proof}
(1) 
Tensor $L^{-1}$ with $F_*^e\MO_X\simeq \bigoplus_j M_j$ and we obtain 
$$F_*^e\MO_X\simeq F_*^e(L^{-p^e})\simeq F_*^e\MO_X\otimes_{\MO_X} L^{-1}\simeq \bigoplus_j (M_j\otimes_{\MO_X} L^{-1}).$$
Since $X$ is $F$-split, we have 
$$F_*^e\MO_X\simeq \MO_X\oplus (\bigoplus_i N_i)$$
where each $N_i$ is an indecomposable sheaf. 
Then, the Krull--Schmidt theorem (\cite[Theorem~2]{Atiyah}) implies $M_{j_1}\otimes_{\MO_X} L^{-1}\simeq \MO_X$ for some $j_1$. 

(2) 
Assume that, for some $j_1\neq j_2$, $M_{j_1}$ and $M_{j_2}$ are line bundles 
such that $M_{j_1}^{p^e}\simeq \MO_X$, $M_{j_2}^{p^e}\simeq \MO_X$ and $M_{j_1}\simeq M_{j_2}$. 
Let us derive a contradiction. 
Tensor $M_{j_1}^{-1}$ and we obtain 
$$F_*^e\MO_X\simeq F_*^e(M_{j_1}^{-p^e})\simeq \MO_X\oplus \MO_X\oplus(\bigoplus_{j\neq j_1, j_2} (M_j\otimes M_{j_1}^{-1})).$$
Taking $H^0$, we obtain a contradiction. 

(3) 
The assertion immediately follows from (1) and (2). 
\end{proof}

The following lemma is used in the next section and 
well-known for experts on $F$-singularities (cf. the proof of \cite[Theorem~4.3]{SS}). 

\begin{lem}\label{Kodaira-negative}
Let $X$ be a smooth proper variety. 
Assume that $X$ is $F$-split. 
Then, for every $e\in\mathbb Z_{>0}$, 
$$H^0(X, -(p^e-1)K_X)\neq 0.$$ 
In particular, $\kappa(X)\leq 0.$
\end{lem}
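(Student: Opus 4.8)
The plan is to deduce the non-vanishing $H^0(X,-(p^e-1)K_X)\neq 0$ directly from the $F$-splitting via the standard duality identification of $F^e$-splittings with sections of $(1-p^e)K_X$. Concretely, since $X$ is smooth and proper, Grothendieck--Serre duality for the finite morphism $F^e:X\to X$ gives a natural isomorphism
\[
\mathcal Hom_{\MO_X}(F^e_*\MO_X,\MO_X)\;\simeq\;F^e_*\mathcal Hom_{\MO_X}(\MO_X,\,F^{e!}\MO_X)\;\simeq\;F^e_*\bigl(\MO_X((1-p^e)K_X)\bigr),
\]
where I use $F^{e!}\MO_X\simeq \omega_X^{1-p^e}\simeq \MO_X((1-p^e)K_X)$ (this is the standard computation of the dualizing sheaf of the $e$-th Frobenius on a smooth variety). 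Taking global sections on both sides and using that for a finite morphism $\Hom_{\MO_X}(F^e_*\MO_X,\MO_X)=H^0(X,\mathcal Hom_{\MO_X}(F^e_*\MO_X,\MO_X))$, we get a $k$-linear identification
\[
\Hom_{\MO_X}(F^e_*\MO_X,\MO_X)\;\simeq\;H^0\bigl(X,\,(1-p^e)K_X\bigr)\;=\;H^0\bigl(X,\,-(p^e-1)K_X\bigr).
\]

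Next I would use the hypothesis that $X$ is $F$-split. By Lemma~\ref{F-split-lemma} the splitting for exponent $1$ propagates to every $e$, so for each $e\in\mathbb Z_{>0}$ the Frobenius $\MO_X\to F^e_*\MO_X$ splits; that is, there is $\varphi\in\Hom_{\MO_X}(F^e_*\MO_X,\MO_X)$ with $\varphi\circ(\text{Frobenius})=\mathrm{id}_{\MO_X}$. In particular $\varphi\neq 0$, so $\Hom_{\MO_X}(F^e_*\MO_X,\MO_X)\neq 0$, and by the displayed isomorphism $H^0(X,-(p^e-1)K_X)\neq 0$. This proves the first assertion.

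For the final claim $\kappa(X)\le 0$: a non-zero section of $-(p^e-1)K_X$ exhibits an effective divisor $D_e$ with $D_e\sim -(p^e-1)K_X$, hence $-(p^e-1)K_X$ is effective for every $e$. If $\kappa(X)\ge 1$, then some positive multiple $mK_X$ is effective and non-trivial in the sense that $h^0(X,mK_X)\ge 2$ (indeed $\kappa(X)\geq 1$ forces $h^0(X,mK_X)\to\infty$); but then for suitable $e$ both $mK_X$ and $-(p^e-1)K_X$ are effective while $(p^e-1)K_X + m K_X$ would have to be both effective (product of the two sections) — more cleanly, $mK_X$ effective and $-(p^e-1)K_X$ effective force $mK_X\sim 0$, contradicting $h^0(X,mK_X)\ge 2$. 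Hence $\kappa(X)\le 0$.

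The only genuine obstacle is getting the duality isomorphism stated precisely: one must identify $F^{e!}\MO_X$ with $\omega_X^{\,1-p^e}$, which for smooth $X$ follows from $F^{e!}\mathcal F \simeq \mathcal Hom_{\MO_X}(F^e_*\MO_X,\omega_X)\otimes (F^{e*}\mathcal F)$ together with $\omega_{X,\,\mathrm{rel}}=\omega_X^{\otimes(p^e-1)}$ for the relative dualizing sheaf of $F^e$, and this is entirely standard in the $F$-singularities literature (cf. the proof of \cite[Theorem~4.3]{SS} as the paper already notes). Everything else is formal: the finite pushforward commutes with global sections, and $F$-splitting gives the required non-zero Hom.
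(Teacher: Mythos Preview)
Your proof is correct and follows essentially the same approach as the paper: both arguments rest on Grothendieck duality for the finite Frobenius morphism to identify splittings of $\MO_X\to F^e_*\MO_X$ with nonzero sections of $\omega_X^{1-p^e}$. The paper dualizes against $\omega_X$ to get $\mathcal Hom_{\MO_X}(F^e_*\MO_X,\omega_X)\simeq F^e_*\omega_X$ and then untwists, whereas you dualize against $\MO_X$ directly via $F^{e!}\MO_X\simeq\omega_X^{1-p^e}$; these differ only by a twist. One minor remark: your deduction of $\kappa(X)\le 0$ is a bit hurried---the clean version is that if $E\sim mK_X\ge 0$ and $D_e\sim -(p^e-1)K_X\ge 0$, then $(p^e-1)E+mD_e\sim 0$ forces $E=0$, whence $mK_X\sim 0$ and $h^0(X,mK_X)=1$.
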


\begin{proof}
By the Grothendieck duality, we can check 
$$\mathcal Hom_{\MO_X}(F_*^e\MO_X, \omega_X)\simeq F_*^e\omega_X.$$ 
This implies that $\omega_X$ is a direct summand of $F_*^e\omega_X$, 
which is equivalent to the assertion that $\MO_X$ is a direct summand of $F_*^e(\omega_X^{1-p^e})$. 
\end{proof}


\section{A characterization of ordinary abelian varieties}\label{section-main}






In this section, we show the main theorem of this paper: Theorem~\ref{main-theorem2}. 
In the proof, we use \cite[Theorem~1.1.1]{HP}. 
For this, it is necessary to show $\kappa_S(X)=0$. 
We check this in Lemma~\ref{K-torsion}. 
First, we recall the definition of $\kappa_S(X)$. 

\begin{dfn}\label{def-S0}
Let $X$ be a smooth proper variety. 
\begin{enumerate}
\item{Fix $m\in\mathbb Z_{>0}$. We define 
{\small $$S^0(X, mK_X):=\bigcap_{e\geq 0} \Image\left({\rm Tr}: H^0(X, K_X+(m-1)p^eK_X) \to 
H^0(X, mK_X)\right).$$} 
where ${\rm Tr}$ is defined by 
the trace map $F^e_*\omega_X \to \omega_X$. 
For more details, see Remark~\ref{rem-S0} and \cite[Lemma 2.2.3]{HP}.}
\item{We define 
{\small $$\kappa_S(X):=\max\{r\,|\,\dim S^0(X, mK_X)=O(m^r)\,\,{\rm for\,\,sufficiently\,\,divisible}\,\, m\}.$$}
This definition is the same as the one of \cite[Subsection~4.1]{HP}.}
\end{enumerate}
\end{dfn}

\begin{rem}\label{rem-S0}
The trace map $F^e_*\omega_X \to \omega_X$ in Definition~\ref{def-S0} 
is obtained by applying the functor $\mathcal Hom_{\MO_X}(-, \omega_X)$ 
to the Frobenius $\MO_X \to F_*^e\MO_X$. 
Indeed, the Grothendieck duality implies 
$\mathcal Hom_{\MO_X}(F_*^e\MO_X, \omega_X)\simeq F_*^e\omega_X$. 
Thus, we obtain the trace map $F^e_*\omega_X \to \omega_X$. 

By the construction, if $X$ is $F$-split, then 
the trace map $F^e_*\omega_X \to \omega_X$ is a split surjection. 
Therefore, in this case, 
$H^0(X, mK_X)\neq 0$ (resp. $\kappa(X)\geq 0$) implies 
$S^0(X, mK_X)\neq 0$ (resp. $\kappa_S(X)\geq 0$). 
\end{rem}

We check $\kappa_S(X)=0$ to apply \cite[Theorem~1.1.1]{HP} in the proof of Theorem~\ref{main-theorem2}. 

\begin{lem}\label{K-torsion}
Let $X$ be a smooth projective variety. 
If $X$ is $F$-split and $K_X$ is pseudo-effective, 
then the following assertions hold. 
\begin{enumerate}
\item{$(p^e-1)K_X\sim 0$ for every $e\in\mathbb Z_{>0}$.}
\item{$\kappa_S(X)=0$.}
\end{enumerate}
\end{lem}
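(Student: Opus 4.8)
The plan is to prove the two assertions in sequence, using the hypotheses that $X$ is $F$-split and $K_X$ is pseudo-effective.

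For (1), the key input is Lemma~\ref{Kodaira-negative}: since $X$ is $F$-split, $H^0(X, -(p^e-1)K_X)\neq 0$ for every $e\in\mathbb Z_{>0}$. This means the pseudo-effective divisor $K_X$ has a negative multiple $-(p^e-1)K_X$ which is linearly equivalent to an effective divisor. But a pseudo-effective class $D$ with $-D$ effective forces $D$ to be numerically trivial (the sum of an effective class and a pseudo-effective one can be numerically zero only if both are), hence $(p^e-1)K_X$ is numerically trivial and, being linearly equivalent to an effective divisor that is numerically zero, it must actually satisfy $(p^e-1)K_X\sim 0$. More directly: $-(p^e-1)K_X\sim E$ for an effective divisor $E$; since $K_X$ is pseudo-effective, $E=-(p^e-1)K_X$ is antieffective and pseudo-effective at once, so $E=0$ and $(p^e-1)K_X\sim 0$. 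Taking $e=1$ already gives $(p-1)K_X\sim 0$, and then every $(p^e-1)K_X\sim 0$ since $p^e-1$ is a multiple of... actually $p^e-1$ is not always a multiple of $p-1$ as an integer? It is: $p^e-1 = (p-1)(p^{e-1}+\cdots+1)$. So it suffices to establish the case $e=1$ and the rest follows, but the direct argument above works uniformly anyway.

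For (2), we must show $\kappa_S(X)=0$. By Remark~\ref{rem-S0}, since $X$ is $F$-split the trace map $F^e_*\omega_X\to\omega_X$ is a split surjection, so $\kappa(X)\geq 0$ implies $\kappa_S(X)\geq 0$; and here $\kappa(X)\geq 0$ holds because by (1) some positive multiple of $K_X$ is trivial, so $H^0(X, mK_X)\neq 0$ for $m$ divisible by $p-1$. Hence $\kappa_S(X)\geq 0$. For the reverse inequality $\kappa_S(X)\leq 0$, note that $S^0(X, mK_X)\subseteq H^0(X, mK_X)$ by definition, and by (1) the line bundle $mK_X$ is torsion — when $(p-1)\mid m$ we have $mK_X\sim 0$, so $h^0(X, mK_X)=1$, while for other $m$ the group $H^0(X,mK_X)$ has dimension at most $1$ as well (it is $0$ or $mK_X\sim 0$). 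Thus $\dim S^0(X, mK_X)\leq 1$ for all $m$, which is $O(m^0)$, forcing $\kappa_S(X)\leq 0$. Combining, $\kappa_S(X)=0$.

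The main obstacle, such as it is, is the first step: being careful that "pseudo-effective and antieffective implies trivial" is applied correctly, and that one genuinely gets linear (not just numerical) triviality. Linear triviality is immediate here because $-(p^e-1)K_X$ is simultaneously represented by an effective divisor and numerically zero, and on a smooth projective variety an effective divisor numerically equivalent to zero is the zero divisor. Everything after that is bookkeeping with the definitions of $S^0$ and $\kappa_S$ together with the splitting from Remark~\ref{rem-S0}.
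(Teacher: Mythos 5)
Your proof is correct and follows essentially the same route as the paper: part (1) is exactly the paper's argument (Lemma~\ref{Kodaira-negative} gives $-(p^e-1)K_X\sim E\geq 0$, and pseudo-effectivity of $K_X$ forces $E=0$, for which the paper cites \cite[Lemma~5.4]{6}), and part (2) likewise gets $\kappa_S(X)\geq 0$ from Remark~\ref{rem-S0}. The only cosmetic difference is that for the bound $\kappa_S(X)\leq 0$ you argue directly via $\dim S^0(X,mK_X)\leq h^0(X,mK_X)\leq 1$, where the paper instead invokes $\kappa_S(X)\leq\kappa(X)=0$ from \cite[Lemma~4.1.3]{HP}; both are fine.
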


\begin{proof}
(1) By Lemma~\ref{Kodaira-negative}, 
we obtain $-(p^e-1)K_X\sim E$ where $E$ is an effective divisor. 
Then, the pseudo-effectiveness of $K_X$ implies that $E=0$ (cf. \cite[Lemma~5.4]{6}).

(2) By (1), we obtain $\kappa(X)=0$. 
By \cite[Lemma~4.1.3]{HP}, 
it suffices to show $\kappa_S(X)\geq 0.$ 
By Remark~\ref{rem-S0}, 
$\kappa(X)\geq 0$ implies $\kappa_S(X)\geq 0.$
\end{proof}

The following lemma is a key to show Theorem~\ref{main-theorem2}. 

\begin{lem}\label{CY-p-torsion}
Let $X$ be a smooth projective variety. 
Fix $e\in\mathbb Z_{>0}$. 
Assume the following conditions. 
\begin{itemize}
\item{$F_*^e\MO_X\simeq \bigoplus_j M_j$ where each $M_j$ is a line bundle. }
\item{$K_X$ is pseudo-effective.}
\end{itemize}
Then, the following assertions hold. 
\begin{enumerate}
\item{$M_j^{p^e}\simeq \MO_X$ for every $j$. }
\item{The number of $p^e$-torsion line bundles on $X$ is 
equal to $p^{e\cdot \dim X}$.}
\end{enumerate}
\end{lem}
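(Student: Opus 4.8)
The plan is to reduce to a torsion situation via $F$-splitness, and then to exploit a symmetry coming from Grothendieck duality. Since $F^e_*\MO_X$ is a direct sum of line bundles, Lemma~\ref{F-split} shows that $X$ is $F$-split, and then Lemma~\ref{K-torsion}(1) gives $(p^e-1)K_X\sim 0$, equivalently $\omega_X^{p^e-1}\simeq\MO_X$. Since $X$ is smooth the morphism $F^e$ is finite and flat, so $F^e_*\MO_X$ is locally free of rank $p^{e\dim X}$; in particular the decomposition $F^e_*\MO_X\simeq\bigoplus_j M_j$ involves exactly $p^{e\dim X}$ line bundles counted with multiplicity.

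For part (1), I would first check that $H^0(X,M_j^{-p^e})\neq 0$ for every $j$. Tensoring the decomposition with $M_j^{-1}$ and applying the projection formula identifies $F^e_*\MO_X\otimes_{\MO_X}M_j^{-1}$ with $F^e_*(M_j^{-p^e})$ and with $\bigoplus_i(M_i\otimes_{\MO_X}M_j^{-1})$; taking global sections, the summand $i=j$ forces $H^0(X,M_j^{-p^e})\neq 0$. The essential extra ingredient is the ``dual'' decomposition: Grothendieck duality gives $\mathcal Hom_{\MO_X}(F^e_*\MO_X,\omega_X)\simeq F^e_*\omega_X$ (as in Remark~\ref{rem-S0} and the proof of Lemma~\ref{Kodaira-negative}), so $F^e_*\omega_X\simeq\bigoplus_j(M_j^{-1}\otimes_{\MO_X}\omega_X)$; on the other hand, using $\omega_X^{p^e-1}\simeq\MO_X$ and the projection formula once more, $F^e_*\omega_X\simeq(F^e_*\MO_X)\otimes_{\MO_X}\omega_X\simeq\bigoplus_j(M_j\otimes_{\MO_X}\omega_X)$. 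Comparing the two, the Krull--Schmidt theorem (\cite[Theorem~2]{Atiyah}) shows that $\{M_j\}_j$ and $\{M_j^{-1}\}_j$ coincide as multisets. Hence for each $j$ there is $j'$ with $M_{j'}\simeq M_j^{-1}$, and the previous step applied to $j'$ yields $H^0(X,M_j^{p^e})\neq 0$ as well. Then $M_j^{p^e}$ and $M_j^{-p^e}$ are both effective and their sum is linearly equivalent to $0$, which on the projective variety $X$ forces both to be trivial; thus $M_j^{p^e}\simeq\MO_X$.

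For part (2), I would observe that by (1) each $M_j$ is a $p^e$-torsion line bundle, so (using that $X$ is $F$-split) Lemma~\ref{p-torsion}(2) shows $M_{j_1}\not\simeq M_{j_2}$ whenever $j_1\neq j_2$. Therefore $X$ has at least $p^{e\dim X}$ pairwise non-isomorphic $p^e$-torsion line bundles, and together with the upper bound $p^{e\dim X}$ of Lemma~\ref{p-torsion}(3) this gives equality.

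The step I expect to be the real obstacle is the duality argument in (1): the direct computation only produces sections of the $M_j^{-p^e}$, and without the symmetry furnished by $\mathcal Hom_{\MO_X}(-,\omega_X)$ combined with the triviality of $\omega_X^{p^e-1}$ there is no evident reason for the $M_j$ to be torsion rather than merely anti-effective. Once $F$-splitness and the torsion of $K_X$ are available, the remaining arguments are formal applications of Krull--Schmidt and the counting lemmas.
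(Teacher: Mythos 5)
Your proposal is correct and follows essentially the same route as the paper: $F$-splitness via Lemma~\ref{F-split}, $(p^e-1)K_X\sim 0$ via Lemma~\ref{K-torsion}, the section of $M_j^{-p^e}$ from the projection formula, and the Grothendieck-duality decomposition $F^e_*\omega_X\simeq\bigoplus_j(M_j^{-1}\otimes\omega_X)$ to produce the complementary section of $M_j^{p^e}$. The only cosmetic difference is that you invoke Krull--Schmidt to identify $\{M_j\}$ with $\{M_j^{-1}\}$ as multisets and then reuse the first step, whereas the paper tensors the dual decomposition by $\omega_X^{-1}$ to get $F^e_*\MO_X\simeq\bigoplus_j M_j^{-1}$ directly and then tensors by $M_{j_0}$; both are valid and equivalent.
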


\begin{proof}
(1) 
By Lemma~\ref{F-split}, 
$X$ is $F$-split. 
Thus, Lemma~\ref{K-torsion} implies $(p^{e}-1)K_X\sim 0$. 
Fix an index $j_0$ and we show $M_{j_0}^{p^e}\simeq \MO_X$. 
We can write 
$$F_*^e\MO_X=M_{j_0}\oplus(\bigoplus_{j\neq j_0} M_j).$$
Tensor $M_{j_0}^{-1}$ and we obtain 
$$H^0(X, M_{j_0}^{-p^e})\simeq H^0(X, \MO_X)\oplus\cdots.$$
In particular, we obtain $H^0(X, M_{j_0}^{-p^e})\neq 0$. 
On the other hand, by applying 
$\mathcal Hom_{\MO_X}(-, \omega_X)$, we have 
\begin{eqnarray*}
F_*^e\omega_X
&\simeq&\mathcal Hom_{\MO_X}(F_*^e\MO_X, \omega_X)\\
&\simeq&\mathcal Hom_{\MO_X}(\bigoplus_j M_j, \omega_X)\\
&\simeq&\bigoplus_j (M_j^{-1}\otimes \omega_X)\\
\end{eqnarray*}
where the first isomorphism follows from the Grothendieck duality theorem for finite morphisms. 
Tensor $\omega_X^{-1}$ and we obtain 
$$F_*^e\MO_X\simeq F_*^e(\omega_X^{1-p^e})\simeq 
(F_*^e\omega_X)\otimes_{\MO_X}\omega_X^{-1}\simeq \bigoplus_j M_j^{-1}.$$
Then, tensor $M_{j_0}$, and we obtain $H^0(X, M_{j_0}^{p^e})\neq 0$. 
Therefore, $M_{j_0}^{p^e}\simeq \MO_X$. 

(2) 
By Lemma~\ref{F-split}, $X$ is $F$-split. 
Then, the assertion follows from (1) and Lemma~\ref{p-torsion}. 
\end{proof}

Ordinary abelian varieties satisfy the condition that $F_*^e\MO_X$ 
is decomposed into line bundles. 

\begin{lem}\label{ord-abel}
Let $A$ be a $d$-dimensional ordinary abelian variety. 
Fix $e\in\mathbb Z_{>0}$. 
Let $\{M_j^{(e)}\}_{j\in J}$ be the set of the $p^e$-torsion line bundles on $X$. 
Then, the following assertions hold. 
\begin{enumerate}
\item{$F_*^e\MO_A\simeq \bigoplus_{j\in J} M_j^{(e)}$. }
\item{$M_j^{(e)}\in\Pic^0(A)$ for every $j\in J$. }
\end{enumerate}
\end{lem}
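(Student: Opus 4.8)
The plan is to reduce everything to the behavior of the Frobenius on the $p^e$-torsion of the dual abelian variety. Recall that on an abelian variety $A$ of dimension $d$ one has the multiplication-by-$p^e$ isogeny $[p^e]\colon A\to A$, and $F^e$ factors (up to a twist by the Verschiebung) through $[p^e]$; more to the point, $[p^e]^*$ acts on $\Pic^0(A)$ as multiplication by $p^e$, so the $p^e$-torsion of $\Pic^0(A)$ is exactly $\widehat A[p^e]$, which has order $p^{ed}$ precisely because $A$ is ordinary. Thus the index set $J$ in the statement satisfies $|J|=p^{ed}$, and all the $M_j^{(e)}$ listed lie in $\Pic^0(A)$ by construction — this already gives assertion (2) once we know assertion (1), since the $p^e$-torsion of $\Pic(A)$ coincides with the $p^e$-torsion of $\Pic^0(A)$ (the N\'eron--Severi group of an abelian variety is torsion-free).

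For assertion (1), first I would establish the rank count: $F^e_*\MO_A$ is locally free of rank $p^{ed}$, which matches $|J|=p^{ed}$, so it suffices to exhibit each $M_j^{(e)}$ as a direct summand and invoke the Krull--Schmidt theorem (\cite[Theorem~2]{Atiyah}) to conclude the decomposition is exhaustive. Since $A$ is ordinary it is $F$-split, so by Lemma~\ref{p-torsion}(1) every $p^e$-torsion line bundle $L$ is isomorphic to some indecomposable summand of $F^e_*\MO_A$; thus I need to know these summands are themselves line bundles. The cleanest route: project the Frobenius $F^e$ through the isogeny structure. Writing $F^e=V^e\circ [p^e]$ (Frobenius--Verschiebung factorization) is one option, but the more direct computation is to use that $F^e_*\MO_A$, pushed forward along the ordinary abelian variety, decomposes via the character decomposition of the finite flat group scheme $\ker F^e$, whose Cartier dual is étale of order $p^{ed}$ exactly because of ordinariness; the characters are precisely the $p^e$-torsion points of $\widehat A$, i.e. the elements of $\Pic^0(A)[p^e]$, and each isotypic piece is a single line bundle $M_j^{(e)}$.

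Concretely, I would argue as follows. The étale-locally-trivial group scheme $\ker(F^e\colon A\to A)$ is, for ordinary $A$, an extension whose Cartier dual is the constant group scheme $\Pic^0(A)[p^e]$. Pushing forward $\MO_A$ along the $\ker F^e$-torsor $A\to A$ (the source being $A$ mapped by $F^e$) decomposes $F^e_*\MO_A$ into eigensheaves indexed by the character group of $\ker F^e$, and each eigensheaf is an invertible sheaf corresponding to the associated character, which is exactly a $p^e$-torsion element of $\Pic^0(A)$. This gives $F^e_*\MO_A\simeq\bigoplus_{\chi} L_\chi$ with the $L_\chi$ running over $\Pic^0(A)[p^e]$, and since $|\Pic^0(A)[p^e]|=p^{ed}$ equals the rank, these are all of them; comparing with Lemma~\ref{p-torsion}(1)--(2) identifies $\{L_\chi\}$ with $\{M_j^{(e)}\}_{j\in J}$.

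\textbf{Main obstacle.} The delicate point is the claim that each eigensheaf (equivalently, each indecomposable summand) is genuinely a \emph{line bundle} and not merely an indecomposable locally free sheaf of higher rank — this is where ordinariness is essential and where the supersingular case fails (cf. Theorem~\ref{0abel-main} with $r_X=0$). I expect to handle it by the torsor/character-sheaf argument above, which makes the rank-one property manifest: étaleness of the Cartier dual of $\ker F^e$ (the defining feature of ordinary $A$) is exactly what splits $F^e_*\MO_A$ into characters rather than leaving indecomposable blocks. An alternative, if one prefers to avoid group-scheme language, is induction on $e$ combined with the case $e=1$: for $e=1$ one uses that $F_*\MO_A$ is $F$-split and has rank $p^d$, together with the fact that the $p$-torsion points of $\widehat A$ number $p^d$, and Lemma~\ref{p-torsion} forces the decomposition into line bundles; then push forward through $F$ again and match ranks. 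Either way, the heart of the matter is translating "ordinary" into "the relevant infinitesimal group scheme has enough characters," and once that translation is in hand the rest is bookkeeping with Krull--Schmidt.
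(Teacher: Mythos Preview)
Your proposal is correct, but the paper's proof is precisely the two-line counting argument you mention only as an ``alternative.'' The paper observes that $\Pic^0(A)$ already contains $p^{ed}$ distinct $p^e$-torsion line bundles (since $\hat A$ is ordinary) and then applies Lemma~\ref{p-torsion}: part~(1) realizes each such line bundle as an indecomposable summand of $F^e_*\MO_A$, these summands have distinct indices, and since the total rank of $F^e_*\MO_A$ is $p^{ed}$ they exhaust the Krull--Schmidt decomposition. Part~(3) of the same lemma (or, as you note, torsion-freeness of ${\rm NS}(A)$) then gives assertion~(2).

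Your main route --- decomposing $F^e_*\MO_A$ via characters of $\ker F^e$, using that ordinariness makes this kernel of multiplicative type (isomorphic to $\mu_{p^e}^d$) so that its Cartier dual is \'etale --- is valid and more conceptual: it explains structurally why the summands have rank one rather than merely verifying it by a count. But the ``main obstacle'' you flag is not an obstacle within the paper's setup. Once Lemma~\ref{p-torsion}(1) gives $L\simeq M_{j_1}$ with $L$ a line bundle, the summand $M_{j_1}$ \emph{is} a line bundle; there is nothing further to check, and the rank count finishes immediately. The paper's proof is literally one sentence, whereas your approach imports torsor and Cartier-duality machinery that, while correct, is not needed here.
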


\begin{proof}
The number of $p^e$-torsion line bundles in $\Pic^0(X)$ is $p^{ed}$. 
Apply Lemma~\ref{p-torsion} and we obtain the assertion.
\end{proof}

We also need the following lemma.

\begin{lem}\label{determinant}
Let $X$ be a proper normal variety. 
Fix $e\in\mathbb Z_{>0}$. 
Assume that there are mutually distinct $p^e$-torsion line bundles 
$L_1, \cdots, L_{p^{e\dim X}}$ on $X$. 
Let $F_*^e\MO_X \simeq E\oplus E'$ where 
$E\neq 0$ is an indecomposable coherent sheaf and $E'$ is a coherent sheaf. 
Then, the following assertions hold. 
\begin{enumerate}
\item{If $\rank\,E<p$, then $F_*^e\MO_X \simeq \bigoplus_{i=1}^{p^{e\dim X}} L_i.$}
\item{If $\rank\,E=p$, then $E\otimes_{\MO_X} L_i \simeq E\otimes_{\MO_X} L_j$ 
for some $1\leq i< j \leq p^{e\dim X}$.}
\end{enumerate}
\end{lem}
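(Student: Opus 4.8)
The plan is to play off the determinant of $F_*^e\MO_X$ against the line bundles $L_i$ and use the rank hypothesis to force either a full splitting into line bundles or a coincidence among the twists $E\otimes L_i$.

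First I would record the basic numerical facts. Since $X$ is normal, $F_*^e\MO_X$ is torsion-free of rank $p^{e\dim X}$, and its determinant (i.e. the double dual of the top exterior power, a divisorial sheaf) is computable: applying $\mathcal Hom_{\MO_X}(-,\MO_X)$-type manipulations or the standard formula gives $\det(F_*^e\MO_X)\simeq \MO_X((1-p^e)\tfrac{?}{}K_X)$-type expression; more to the point, for the argument what matters is that twisting by a $p^e$-torsion line bundle $L$ does not change $F_*^e\MO_X$, because $F_*^e\MO_X\otimes L\simeq F_*^e(F^{e*}L)\simeq F_*^e(L^{p^e})\simeq F_*^e\MO_X$. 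So for each $i$ we get an isomorphism $E\oplus E' \simeq (E\otimes L_i)\oplus(E'\otimes L_i)$, where $E\otimes L_i$ is again indecomposable (tensoring by a line bundle is an autoequivalence). By the Krull--Schmidt theorem (\cite[Theorem~2]{Atiyah}), $E\otimes L_i$ is isomorphic to one of the indecomposable summands of $F_*^e\MO_X$.

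For part (2): suppose $\rank E = p$. Consider the $p^{e\dim X}$ indecomposable sheaves $E\otimes L_1,\dots, E\otimes L_{p^{e\dim X}}$; each appears among the indecomposable summands of $F_*^e\MO_X$ by the previous paragraph. If they were pairwise non-isomorphic, then $F_*^e\MO_X$ would contain $p^{e\dim X}$ distinct summands each of rank $p$, forcing $\rank F_*^e\MO_X \geq p\cdot p^{e\dim X} > p^{e\dim X}$, a contradiction. Hence $E\otimes L_i\simeq E\otimes L_j$ for some $i<j$, which is exactly the claim.

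For part (1): suppose $\rank E < p$. I would argue that the presence of $p^{e\dim X}$ distinct $p^e$-torsion line bundles among the summands, combined with the rank constraint, forces every indecomposable summand to be a line bundle. Concretely: by Lemma~\ref{p-torsion}, since $X$ is $F$-split (it is, because $F_*^e\MO_X$ has $\MO_X$ as a summand — one of the $L_i$ is trivial and is a summand, or more directly Lemma~\ref{F-split-lemma}), each of the $p^{e\dim X}$ torsion line bundles $L_i$ occurs as a distinct indecomposable summand $M_{j_i}$ of $F_*^e\MO_X$. These already account for rank exactly $p^{e\dim X}$, which is the full rank. Therefore $F_*^e\MO_X\simeq\bigoplus_{i=1}^{p^{e\dim X}} L_i$ and there is no room for any other summand; in particular $E$ itself is one of the $L_i$. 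Here one must be a little careful that Lemma~\ref{p-torsion}(1),(2) require only the $F$-split hypothesis, which holds, so the $p^{e\dim X}$ line bundles genuinely sit inside the decomposition as distinct summands. The role of $\rank E<p$ is: if instead the summands were not all line bundles, the torsion line bundles would still have to appear, but then total rank would exceed $p^{e\dim X}$ — unless the only non-line-bundle summand has rank $< p$ and ... actually the cleanest route is to observe that $\rank E < p$ together with $E$ being a twist-stable indecomposable (as in part (2)'s orbit argument) forces the $E\otimes L_i$ to have a repeat only if $\rank E \geq p$; since $\rank E<p$ they are all distinct, giving $p^{e\dim X}$ distinct summands of rank $\rank E$, so $\rank E \cdot p^{e\dim X}\le p^{e\dim X}$, i.e. $\rank E = 1$, and then all summands are line bundles, necessarily the $L_i$.

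The main obstacle I anticipate is the bookkeeping in part (1): pinning down precisely why $\rank E < p$ (rather than just $\rank E = 1$) suffices, and making the interaction between the Krull--Schmidt orbit argument and Lemma~\ref{p-torsion} airtight — in particular ensuring the $p^{e\dim X}$ torsion line bundles and the translates $E\otimes L_i$ are being counted consistently as summands of the \emph{same} decomposition. The determinant computation of $F_*^e\MO_X$, if needed at all, is routine and can be quoted; the real content is the counting of ranks under the torsion-twist action.
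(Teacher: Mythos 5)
Your part (2) is correct and is essentially the paper's argument: if the $E\otimes L_i$ were pairwise non-isomorphic, Krull--Schmidt would force $\rank(F_*^e\MO_X)\geq p\cdot p^{e\dim X}$, a contradiction. The gap is in part (1), at exactly the point you flag as "the main obstacle": you assert that a coincidence $E\otimes L_i\simeq E\otimes L_j$ can happen "only if $\rank E\geq p$", citing "part (2)'s orbit argument" — but part (2) proves the opposite implication (non-coincidence $\Rightarrow$ rank overflow) and gives no information here. The missing step is a determinant argument applied to $E$ itself, not to $F_*^e\MO_X$: one first checks that $E$ is reflexive (take double duals in a Krull--Schmidt decomposition of the reflexive sheaf $F_*^e\MO_X$), restricts to the regular locus $X_{\rm reg}$ where $E$ is locally free, and takes determinants of $E\otimes L_i\simeq E\otimes L_j$ to get $\det(E|_{X_{\rm reg}})\otimes (L_i|_{X_{\rm reg}})^{\rank E}\simeq \det(E|_{X_{\rm reg}})\otimes (L_j|_{X_{\rm reg}})^{\rank E}$, hence $(L_i\otimes L_j^{-1})^{\rank E}\simeq\MO_X$ since $\Pic(X)\to\Pic(X_{\rm reg})$ is injective for $X$ normal. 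A nontrivial $p^e$-torsion line bundle has order a positive power of $p$, which cannot divide $\rank E<p$; so $L_i\simeq L_j$, a contradiction. This is the only place the hypothesis $\rank E<p$ enters, and without it part (1) does not close.

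Two smaller issues in part (1). First, your opening move — "$X$ is $F$-split because one of the $L_i$ is trivial and is a summand" — is circular: the hypothesis only provides $p^{e\dim X}$ distinct $p^e$-torsion line bundles on $X$, not that any of them is a summand of $F_*^e\MO_X$, and Lemma~\ref{p-torsion} cannot be invoked before $F$-splitness is known. The correct order (and the paper's) is: distinctness of the $E\otimes L_i$ forces $\rank E=1$ and $F_*^e\MO_X\simeq\bigoplus_i E\otimes L_i$ with $E$ divisorial; then Lemma~\ref{F-split} gives $F$-splitness; only then does Lemma~\ref{p-torsion} identify the $p^{e\dim X}$ summands with the $p^{e\dim X}$ distinct torsion line bundles $L_i$. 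Second, after $\rank E=1$ the summands are a priori rank-one reflexive sheaves, not line bundles, so the final identification genuinely needs that last Lemma~\ref{p-torsion} step rather than being automatic.
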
 

\begin{proof}
Set $X_{{\rm reg}} \subset X$ to be the regular locus of $X$. 
Since $(F_*^e\MO_X)|_{X_{{\rm reg}}}$ is locally free, 
$E|_{X_{{\rm reg}}}$ is also locally free. 

We show that $E$ is reflexive. 
Let 
$$F_*^e\MO_X \simeq E_1 \oplus \cdots \oplus E_s$$
be a decomposition into indecomposable sheaves with $E_1 \simeq E$. 
Take the double dual. 
Since $F_*^e\MO_X$ is reflexive, each $E_i$ is reflexive by the Krull--Schmidt theorem (\cite[Theorem 2]{Atiyah}).

\medskip
(1) 
We show that 
$$E\otimes_{\MO_X} L_i\not\simeq E\otimes_{\MO_X} L_j$$
for every $1\leq i< j\leq p^{e\dim X}$. 
Assume $E\otimes_{\MO_X} L_i\simeq E\otimes_{\MO_X} L_j$ for some $1\leq i< j\leq p^{e\dim X}$. 
Then, we obtain 
$$\det\,(E|_{X_{{\rm reg}}})\otimes_{\MO_{X_{{\rm reg}}}} (L_i|_{X_{{\rm reg}}})^{\rank\,E} \simeq 
\det\,(E|_{X_{{\rm reg}}})\otimes_{\MO_{X_{{\rm reg}}}} (L_j|_{X_{{\rm reg}}})^{\rank\,E}.$$
By $1\leq \rank\,E<p$, we obtain $L_i \simeq L_j$, which is a contradiction. 

Thus $E\otimes_{\MO_X} L_i$ is 
also an indecomposable direct summand of $F^e_*\MO_X$. 
Therefore, we see $\rank\,E=1$ and 
$$F^e_*\MO_X \simeq \bigoplus_{i=1}^{p^{e\dim X}}E\otimes_{\MO_X} L_i.$$ 
Since $E$ is a divisorial sheaf, $X$ is $F$-split by Lemma~\ref{F-split}. 
Then, the assertion follows from Lemma~\ref{p-torsion}. 

(2) 
Assume that 
$E\otimes_{\MO_X} L_i \not\simeq E\otimes_{\MO_X} L_j$ 
for every $1\leq i< j \leq p^{e\dim X}$. 
Let us derive a contradiction. 
Since $E$ is indecomposable, so is $E\otimes_{\MO_X} L_i$ for every $i$. 
Moreover, $E\otimes_{\MO_X} L_i$ is also a direct summand of $F_*^e\MO_X$. 
Thus, by the Krull--Schmidt theorem (\cite[Theorem 2]{Atiyah}), 
we obtain 
$$F_*^e\MO_X \simeq \bigoplus_{i=1}^{p^{e\dim X}}E\otimes_{\MO_X} L_i\oplus \cdots.$$
Then, we obtain the following contradiction: 
$$p^{e \dim X} =\rank(F_*^e\MO_X) \geq p^{e\dim X}\times \rank\,E=p^{e\dim X}\times p.$$
\end{proof}






We show the main theorem of this paper. 

\begin{thm}\label{main-theorem2}
Let $X$ be a smooth projective variety. 
Assume that the following conditions hold. 
\begin{itemize}
\item{For infinitely many $e\in\mathbb Z_{>0}$, 
$F_*^e\MO_X\simeq \bigoplus_j M_j^{(e)}$ where each $M_j^{(e)}$ is a line bundle.}
\item{$K_X$ is pseudo-effective.}
\end{itemize}
Then, $X$ is an ordinary abelian variety. 
\end{thm}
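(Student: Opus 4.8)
The plan is to reduce, via the structural results already established, to the statement that the Albanese morphism $\alpha = \alpha_X \colon X \to A := \Alb(X)$ is an isomorphism, and then to verify ordinariness of $A$. First I would record the easy consequences: by Lemma~\ref{F-split} the hypothesis makes $X$ $F$-split, so Lemma~\ref{K-torsion} gives $(p^e-1)K_X \sim 0$ for all $e$ and $\kappa_S(X) = 0$. Applying \cite[Theorem~1.1.1]{HP} then shows $\alpha$ is surjective, and in fact that result (together with $\kappa_S(X)=0$) puts us in the situation where we must only rule out positive-dimensional fibres and then handle the inseparable part of $\alpha$. Throughout, Lemma~\ref{CY-p-torsion} is the workhorse: for each of the infinitely many $e$ in the hypothesis, $F_*^e\MO_X = \bigoplus_j M_j^{(e)}$ with every $M_j^{(e)}$ a $p^e$-torsion line bundle, and the number of $p^e$-torsion line bundles on $X$ is exactly $p^{e\dim X}$.

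\textbf{Step 1: $\alpha$ is generically finite.} Set $r_X$ to be the $p$-rank of $\underline{\Pic}^0(X)_{\red}$; since $\alpha^*\colon \Pic^0(A) \xrightarrow{\sim} \Pic^0(X)$ is an isomorphism and $\dim A$ equals the dimension of $\Pic^0(X)_{\red}$, we have $r_X = $ the $p$-rank of $A$, and $\dim A \le \dim X$ with equality iff $\alpha$ is generically finite. By Proposition~\ref{picard-exact}(2) and Lemma~\ref{CY-p-torsion}(2), the count $|\Pic(X)[p^e]| = p^{e\dim X}$ for infinitely many $e$ forces $r_X \ge \dim X$ after taking the limit; hence $r_X = \dim X = \dim A$, so $\alpha$ is generically finite and $A$ is an ordinary abelian variety of dimension $\dim X$. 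In particular $K_X \sim_{\mathbb Q} 0$ and, by generic finiteness onto an abelian variety, $\alpha$ is actually finite (no exceptional divisors can be contracted since $K_X$ is torsion and $X$ is smooth; alternatively invoke that a generically finite morphism from a smooth variety with trivial canonical class to an abelian variety is étale in codimension one, hence finite by purity/Zariski's main theorem after Stein factorization).

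\textbf{Step 2: $\alpha$ is separable, hence an isomorphism.} Write $\alpha = \alpha_{\mathrm{sep}} \circ \alpha_{\mathrm{insep}}$ through its separable and purely inseparable parts, or more simply argue that any purely inseparable part can be removed. The key point, as sketched in the introduction, is to show $\alpha$ has no purely inseparable sub-cover: if it did, factoring through a degree-$p$ purely inseparable $\beta\colon X \to Y$ with $Y$ an abelian variety, then by Proposition~\ref{p-rank-insep} $Y$ is again ordinary, and the Frobenius $F_Y$ of $Y$ factors as $Y \to X \xrightarrow{\beta} Y$; using $F$-splitness of $X$ one obtains $(F_Y)_*\MO_Y \simeq \beta_*\MO_X \oplus E$, so $\beta_*\MO_X \simeq \bigoplus_{i=1}^{p} M_i$ with $M_1, \dots, M_p$ distinct $p$-torsion line bundles (these are exactly $p$ of the $p^{\dim Y}$ torsion line bundles on $Y$ appearing in $(F_Y)_*\MO_Y$ by Lemma~\ref{ord-abel}). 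Since $\beta$ is purely inseparable of degree $p$, $\beta^*\colon \Pic(Y) \to \Pic(X)$ kills exactly the subgroup $\ker F_Y \cap \Pic(Y)$; choosing $M_1$ with $\beta^*M_1 \not\simeq \MO_X$ and tensoring the decomposition by $M_1^{-1}$ yields $\beta_*\beta^*M_1^{-1} \simeq \MO_Y \oplus \bigoplus_{i\ge 2}(M_i \otimes M_1^{-1})$, so $0 = H^0(X, \beta^*M_1^{-1}) = H^0(Y, \MO_Y) \oplus (\cdots) \ne 0$, a contradiction. Therefore $\alpha$ is separable; being a finite separable morphism between smooth projective varieties of the same dimension with $K_X = \alpha^*K_A = 0$, it is étale, hence (since $A$ is simply connected-free only up to isogeny — rather: since $\alpha_*\MO_X$ is a direct sum of line bundles in $\Pic^0$, use Lemma~\ref{determinant} to bound its rank) a degree-one map, i.e. an isomorphism. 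Concretely: Lemma~\ref{CY-p-torsion} gives $F_*^e\MO_X = \bigoplus M_j^{(e)}$ with all $p^{e\dim X}$ torsion line bundles appearing once; pulling back via the étale $\alpha$ and comparing with the analogous decomposition for $A$ (Lemma~\ref{ord-abel}) forces $\deg \alpha = 1$.

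\textbf{Main obstacle.} The genuinely delicate part is Step~2, the inseparable case — one must carefully control how $\beta^*$ acts on the $p$-torsion subgroup of $\Pic$ and ensure such an $M_1$ exists, and in the general (degree $p^k$, mixed separable/inseparable) situation iterate this while keeping track of the $F$-split splittings and the Krull–Schmidt bookkeeping; Lemma~\ref{determinant} is designed precisely to bound the ranks of the indecomposable summands of $\alpha_*\MO_X$ and push through the reduction, and getting its rank-$p$ dichotomy to interact correctly with the torsion-twist argument is where the real work lies. Everything in Step~1 is comparatively formal given Lemma~\ref{CY-p-torsion} and \cite[Theorem~1.1.1]{HP}.
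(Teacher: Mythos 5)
Your overall strategy matches the paper's ($F$-splitness, $\kappa_S(X)=0$, surjectivity of $\alpha$ via \cite[Theorem~1.1.1]{HP}, $p$-rank counting for generic finiteness, the $H^0$-contradiction for the inseparable part, \'etaleness for the separable part), but there are two genuine gaps. The first is the passage from ``$\alpha$ is generically finite'' to ``$\alpha$ is finite.'' Your justification --- that no exceptional divisor can be contracted because $K_X$ is torsion and $X$ is smooth --- is not a valid principle: a smooth variety with torsion canonical class can admit a birational contraction onto a singular variety (contract a $(-2)$-curve on a K3 surface). To run a negativity-lemma argument on the birational part $f\colon X\to Y$ of the Stein factorization one needs $Y$ to be terminal with $K_Y$ Cartier; the paper earns this by first proving that $Y$ is \emph{smooth} (Step~\ref{Ysmooth}): $Y$ inherits $F$-splitness, carries at least $p^{\dim Y}$ distinct $p$-torsion line bundles pulled back from $A$, hence $F_*\MO_Y$ splits into line bundles by Lemma~\ref{p-torsion}, and Kunz's criterion applies. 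This step is absent from your proposal and cannot be waved away; your ``alternative'' via \'etaleness in codimension one presupposes separability and still does not rule out contracted divisors.

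The second gap is in the inseparable case: you factor off a degree-$p$ purely inseparable $\beta\colon X\to Y$ ``with $Y$ an abelian variety'' and then invoke Lemma~\ref{ord-abel} on $Y$ to identify $(F_Y)_*\MO_Y$ with the direct sum of the $p$-torsion line bundles. But the intermediate variety is in general only a normal projective variety $W$ sitting strictly between $X$ and the separable closure $Z$; it need not be abelian, so Lemma~\ref{ord-abel} does not apply to it. This is precisely the technical difficulty the paper resolves with Lemma~\ref{determinant}: one proves $F_*\MO_W\simeq\bigoplus_i\beta^*M_i$ via the dichotomy ``$\varphi_*\MO_X$ decomposable, hence a rank $<p$ summand exists and Lemma~\ref{determinant}(1) applies, versus $\varphi_*\MO_X$ indecomposable of rank $p$, excluded by Lemma~\ref{determinant}(2) and the projection formula.'' You name this lemma in your closing paragraph but do not execute the argument. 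Relatedly, the existence of a summand $M_{j_0}$ whose pullback to $X$ is nontrivial is obtained in the paper from $\#J=p\ge 2$ together with the injectivity of $\alpha^*$ on $\Pic^0(A)$ (pulling back all the way to $X$ via $\alpha$, not merely via $\beta$); your appeal to ``$\beta^*$ kills exactly $\ker F_Y\cap\Pic(Y)$'' does not supply this. A smaller wobble: in the separable case the clean conclusion after \'etaleness is \cite[Section~18, Theorem]{Mumford} (so $X$ is an abelian variety and $\alpha$ is an isomorphism by the universal property of the Albanese); your degree-one count by comparing decompositions is neither needed nor obviously correct.
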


\begin{proof}
Let 
$$\alpha:X \to A:=\Alb(X)$$
be the Albanese morphism. 

\begin{step}\label{main-theorem1}
In this step, we show the following assertions. 
\begin{enumerate}
\item{The Albanese morphism $\alpha:X\to A$ is surjective. }
\item{The Albanese variety $A$ is an oridnary abelian variety such that $\dim X=\dim A$. }
\item{For every $e\in\mathbb Z_{>0}$, 
$F_*^e\MO_X\simeq \bigoplus_j M_j^{(e)}$ where each $M_j^{(e)}$ is a $p^e$-torsion line bundle. }
\end{enumerate}
\end{step}
\begin{proof}[Proof of Step~\ref{main-theorem1}]
(1) 
Lemma~\ref{F-split} implies that $X$ is $F$-split. 
By Lemma~\ref{K-torsion}, we see $\kappa_S(X)=0$. 
Thus we can apply \cite[Theorem~1.1.1(1)]{HP}. 
Then, the Albanese morphism $\alpha:X\to \Alb(X)$ is surjective.

(2) 
By (1), we obtain $\dim \Pic^0(X)_{\red}\leq \dim X$. 
Set $r_X$ to be the $p$-rank of $\Pic^0(X)_{\red}$. 
It suffices to show that $r_X=\dim X$. 
By Lemma~\ref{CY-p-torsion} and an assumption, 
the number of $p^e$-torsion line bundles is equal to $p^{e\dim X}$ for infinitely many $e\in\mathbb Z_{>0}$. 
By Proposition~\ref{picard-exact}(2), 
we can find an integer $\xi>0$ such that 
$$p^{er_X} \leq p^{e\dim X}=|\Pic(X)[p^e]| \leq  p^{er_X}\times \xi,$$
for infinitely many $e>0$. 
Taking the limit $e\to \infty$, we obtain $r_X=\dim X.$ 

(3) 
The assertion follows from (2) and Lemma~\ref{p-torsion}. 
This completes the proof of Step~\ref{main-theorem1}.
\end{proof}

By Step~\ref{main-theorem1}, 
the Albanese morphism $\alpha:X \to A$ is a generically finite surjective morphism and 
$A$ is an ordinary abelian variety. 
We obtain the following decomposition 
$$\alpha:X \overset{f}\to Y \overset{g}\to Z \overset{h}\to A$$
such that 
\begin{itemize}
\item{$Y$ and $Z$ are projective normal varieties.}
\item{$f$ is a birational morphism, and $g$ and $h$ are finite surjective morphisms.}
\item{$g$ is purely inseparable and $h$ is separable.}
\end{itemize}
Note that we can find such a decomposition as follows. 
First, we take the Stein factorization of $\alpha$ and we obtain $Y$. 
Then $f:X \to Y$ is birational and $Y \to A$ is finite. 
Second, take the separable closure $L$ of $K(A)$ in $K(X)=K(Y)$ and 
consider the normalization $Z$ of $A$ in $L$. 

\begin{step}\label{Ysmooth}
$Y$ is smooth. 
\end{step}
\begin{proof}[Proof of Step~\ref{Ysmooth}]
Since $f_*\MO_X=\MO_Y$, $Y$ is $F$-split. 
By Lemma~\ref{ord-abel}, there are the mutually distinct 
$p$-torsion line bundles $M_1, \cdots, M_{p^{\dim X}}$ on $A$ such that $M_i \in\Pic^0(A)$. 
By Subsection~\ref{Albanese}, 
$\alpha^*M_1, \cdots, \alpha^*M_{p^{\dim X}}$ are mutually distinct $p$-torsion line bundles on $X$. 
Thus, the number of $p$-torsion line bundles on $Y$ is at least $p^{\dim X}=p^{\dim Y}$. 
Then, by Lemma~\ref{p-torsion}, 
$F_*\MO_Y\simeq \bigoplus_{j \in J} L_j$ for some $p$-torsion line bundles $L_j$ on $Y$. 
Therefore $Y$ is smooth by Kunz's criterion. 
\end{proof}

\begin{step}\label{XY}
$f$ is an isomorphism. 
\end{step}
\begin{proof}[Proof of Step~\ref{XY}]
We can write 
$$K_X=f^*K_Y+E$$
where $E$ is an $f$-exceptional divisor. 
Since $Y$ is smooth and hence terminal (cf. \cite[Section~2.3]{KM}), $E$ is effective. 
Since $K_X\equiv 0$, we see that $E$ is $f$-nef. 
By the negativity lemma (cf. \cite[Lemma~3.39]{KM}), we see $E=0$. 
Therefore, $K_X=f^*K_Y$. 
Thus, the codimension of $\Ex(f)$ in $X$ is at least two. 
Since $Y$ is smooth, $f$ is an isomorphism. 
\end{proof}

Now, we have 
$$\alpha:X \overset{g}\to Z \overset{h}\to A$$
such that 
\begin{itemize}
\item{$Z$ is projective normal variety.}
\item{$g$ is a finite surjective purely inseparable morphism. }
\item{$h$ is a finite surjective separable morphism. }
\end{itemize}

\begin{step}\label{separable}
If $g$ is an isomorphism, then $\alpha$ is also an isomorphism. 
\end{step}
\begin{proof}[Proof of Step~\ref{separable}]
We see that the albanese morphism 
$$\alpha=h:X \to A$$
is a finite surjective separable morphism. 
Since $K_X$ is numerically trivial and $K_A \sim 0$, 
$\alpha:X \to A$ is etale in codimension one. 
Then, by the Zariski--Nagata purity, $\alpha$ is etale. 
By \cite[Section 18, Theorem]{Mumford}, 
$X$ is also an ordinary abelian variety. 
This completes the proof of Step~\ref{separable}. 
\end{proof}

\begin{step}\label{p-insep}
$g$ is an isomorphism. 
\end{step}

\begin{proof}[Proof of Step~\ref{p-insep}]
Assume that $g$ is not an isomorphism. 
Then, we can find 
$$\alpha:X \overset{\varphi}\to W \to Z \to A,\,\,\,\, \beta:W \to A$$
which satisfies the following properties. 
\begin{itemize}
\item{$W$ is a projective normal variety.}
\item{$\varphi:X \to W$ and $W \to Z$ are finite surjective purely inseparable morphisms 
with $[K(X):K(W)]=p$.}
\end{itemize}
Since $A$ is an ordinary abelian variety, 
there are mutually distinct $p$-torsion line bundles $M_1, \cdots, M_{p^{\dim X}}$ 
on $A$ which form a subgroup of $\Pic^0 A$ (Lemma~\ref{ord-abel}). 

\medskip

\begin{claim}
We prove the following assertions. 
\begin{enumerate}
\item[(a)]{$F_*\MO_{W}\simeq \varphi_*\MO_{X}\oplus E$ for some coherent sheaf $E$.}
\item[(b)]{$F_*\MO_{W}\simeq \beta^*M_1\oplus \cdots\oplus \beta^*M_{p^{\dim X}}.$}
\end{enumerate}
\end{claim}

\medskip 

\begin{claimproof}
(a) 
Since $[K(X):K(W)]=p$, 
the Frobenius map $F_{W}$ factors through $\varphi$: 
$$F_{W}:W \overset{\mu}\to X \overset{\varphi}\to W.$$
Since $\mu$ is a finite purely inseparable morphism, 
there is $e\in\mathbb Z_{>0}$ such that 
$F^{e}_X$ factors through $\mu$: 
$$F^{e}_X:X \to W \overset{\mu}\to X.$$
Since $X$ is $F$-split, the identity homomorphism ${\rm id}_{\MO_X}$ 
factors through $\mu_*\MO_{W}$: 
$${\rm id}_{\MO_X}:\MO_X \to \mu_*\MO_{W} \to (F^{e}_X)_*\MO_X \to \MO_X.$$
Thus, we see 
$$\mu_*\MO_{W}\simeq \MO_{X}\oplus E_1$$
for some coherent sheaf $E_1$ on $X$. 
Take the push-forward by $\varphi$ and we obtain 
$$(F_{W})_*\MO_{W}\simeq \varphi_*\MO_X \oplus \varphi_*E_1.$$

(b) 
Set $L_i:=\beta^*M_i$. 
By Subsection~\ref{Albanese}, 
$L_1, \cdots, L_{p^{\dim X}}$ are mutually distinct $p$-torsion line bundles on $W$ 
such that $\{L_1, \cdots, L_{p^{\dim X}}\}$ forms a subgroup  of $\Pic\,W$ and that 
$$\varphi^*L_i \not\simeq \varphi^*L_j$$ for every $1\leq i< j\leq p^{\dim X}$. 
There are the following two cases: 
\begin{itemize}
\item{$\varphi_*\MO_{X}$ is not indecomposable.}
\item{$\varphi_*\MO_{X}$ is indecomposable.}
\end{itemize}

Assume that $\varphi_*\MO_X$ is not indecomposable. 
Then, $F_*\MO_W$ has an indecomposable direct summand of rank $<p$. 
Therefore, by Lemma~\ref{determinant}(1), 
we obtain 
$$F_*\MO_{W}\simeq \beta^*M_1\oplus \cdots\oplus \beta^*M_{p^{\dim X}}.$$
This is what we want to show. 

Assume that $\varphi_*\MO_{X}$ is indecomposable. 
Since $\rank(\varphi_*\MO_{X})=p$, 
we can apply Lemma~\ref{determinant}(2) and can find 
$$\varphi_*\MO_{X}\otimes L_i \simeq \varphi_*\MO_{X}\otimes L_j$$
for some $1\leq i< j \leq p^{\dim X}$. 
Since $\{L_1, \cdots, L_{p^{\dim X}}\}$ is a group, 
we obtain $L_i^{-1}\otimes_{\MO_X} L_j \simeq L_r$ for some $1\leq r \leq p^{\dim X}$ 
with $\varphi^*L_r\not\simeq \MO_X$. 
Tensor $L_i^{-1}$ and we see 
$$\varphi_*\MO_X \simeq \varphi_*\MO_X\otimes L_r \simeq \varphi_*(\varphi^*L_r).$$
Then, taking $H^0$, we obtain the following contradiction 
$$0\neq H^0(X, \MO_X) \simeq H^0(X, \varphi^*L_r)=0,$$
where the last equality holds because $\varphi^*L_r$ 
is a non-trivial $p$-torsion line bundle. 
This completes the proof of Claim. 
\end{claimproof}

\medskip

By the Krull--Schmidt theorem (\cite[Theorem 2]{Atiyah}), 
the assertions (a) and (b) in Claim imply 
$$\varphi_*\MO_X=\bigoplus_{j\in J} \beta^*M_j$$
for some $J \subset \{1, \cdots, p^{\dim X}\}$. 
Since $\#J=p$, we obtain $M_{j_0} \not\simeq \MO_A$ for some $j_0\in J$. 
By Subsection~\ref{Albanese}, 
we see that $\alpha^*M_{j_0}\not\simeq \MO_X$. 
Since $\alpha^*M_{j_0}$ is a non-trivial $p$-torsion line bundle, we obtain 
$$H^0(X, \alpha^*M_{j_0}^{-1})=0.$$
On the other hand, 
we obtain 
$$\varphi_*\alpha^*M_{j_0}^{-1}
\simeq \varphi_*\varphi^*\beta^*M_{j_0}^{-1}
\simeq \varphi_*\MO_X \otimes \beta^*M_{j_0}^{-1} $$
$$\simeq (\bigoplus_{j\in J} \beta^*M_j) \otimes \beta^*M_{j_0}^{-1} 
\simeq \MO_W\oplus(\bigoplus_{j\neq j_0} \beta^*M_j\otimes \beta^*M_{j_0}^{-1}),$$
which implies 
$$H^0(X, \alpha^*M_{j_0}^{-1})\neq 0.$$
This is a contradiction. 
Thus, $g:X \to Z$ is an isomorphism. 
This completes the proof of Step~\ref{p-insep}. 
\end{proof}

Step~\ref{separable} and Step~\ref{p-insep} 
imply the assertion in the theorem. 
\end{proof}


\section{On the behavior of $F^e_*\MO_X$ for some special varieties}\label{curve}

In the former sections, we investigate varieties $X$ such that 
$F_*\MO_X$ is decomposed into line bundles. 
In this section, we study the behavior of $F_*\MO_X$ for some special varieties.

\subsection{Abelian varieties}

In this subsection, we show Theorem~\ref{abel-main}. 
We recall some results essentially obtained by \cite{Oda}.



\begin{thm}[Oda]\label{Oda1}
Let $f:X \to Y$ be an isogeny of abelian varieties over $k$. 
Set $\hat{f}:\hat{Y} \to \hat{X}$ to be the dual of $f$. 
Let $L\in \Pic^0(X)$. 
Then, 
$$f_*L \simeq {\rm pr}_{1*}(\mathcal P_Y|_{Y\times{\hat{f}^{-1}([L])}})$$
where $\mathcal P_Y$ is the normalized Poincare line bundle of $(Y, 0)$ 
and ${\rm pr}_1$ is the first projection. 
\end{thm}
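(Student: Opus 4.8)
The plan is to identify both sides as sheaves on $Y$ and to verify the isomorphism by a base-change / descent argument using the Poincaré line bundle on $X\times\hat X$. First I would recall the setup: since $f:X\to Y$ is an isogeny, the dual $\hat f:\hat Y\to\hat X$ is also an isogeny, and for $L\in\Pic^0(X)$ the class $[L]\in\hat X$ has finite fibre $\hat f^{-1}([L])\subset\hat Y$, a torsor under $\Ker(\hat f)$. The normalized Poincaré bundle $\mathcal P_X$ on $X\times\hat X$ satisfies $\mathcal P_X|_{X\times\{[L]\}}\simeq L$, and by the functoriality of the Poincaré bundle under isogenies we have $(f\times\mathrm{id})^*\mathcal P_X$ related to $\mathcal P_Y$ via $(\mathrm{id}\times\hat f)$; concretely $(f\times\hat f)^{*}\mathcal P_X\simeq (\text{pullback of }\mathcal P_Y\text{ along }\mathrm{id}_Y\times\mathrm{something})$. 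The precise compatibility I would invoke is: $\mathcal P_X\circ(f\times\mathrm{id}_{\hat X}) \simeq \mathcal P_Y\circ(\mathrm{id}_Y\times\hat f)$ as line bundles on $X\times\hat Y$ (this is the defining property of $\hat f$).

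Next I would compute $f_*L$ by a flat base change. Consider the cartesian-type diagram relating $X\xrightarrow{f}Y$ to the pulled-back family over $\hat Y$: restricting $\mathcal P_Y$ to $Y\times\hat f^{-1}([L])$ and pushing forward by $\mathrm{pr}_1$ gives a sheaf on $Y$ whose formation commutes with base change because $\hat f^{-1}([L])$ is finite (so $\mathrm{pr}_1$ restricted there is finite). Pulling back along $f$, one gets on $X$ the sheaf $\mathrm{pr}_{1*}$ of $(f\times\mathrm{id})^*\mathcal P_Y$ restricted to $X\times \hat f^{-1}([L])$; using the compatibility above this restriction becomes $\bigoplus$ over the points of the fibre of translates of $L$, i.e. $f^*f_*L$ matches $\bigoplus_{[L']\in\hat f^{-1}([L])}L'$, which is exactly the expected formula for $f^*f_*L$ of an isogeny. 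Since $f$ is faithfully flat finite, checking the isomorphism after applying $f^*$ (together with a compatibility of the natural adjunction map) suffices.

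Alternatively — and this is probably the cleaner route to write down — I would argue directly on $Y$: the sheaf $f_*L$ is locally free of rank $\deg f = |\Ker f| = |\Ker\hat f| = |\hat f^{-1}([L])|$, and there is a natural evaluation/adjunction map $\mathrm{pr}_{1*}(\mathcal P_Y|_{Y\times\hat f^{-1}([L])})\to f_*L$ (coming from the fact that each $\mathcal P_Y|_{Y\times\{[L']\}}$ for $[L']\in\hat f^{-1}([L])$ is an $f$-twist of $L$, hence admits a canonical map to $f_*L$ by adjunction $\Hom_Y(M, f_*L)=\Hom_X(f^*M,L)$). One then checks this map is an isomorphism by comparing ranks and checking it fibrewise over closed points of $Y$, where it reduces to the classical statement that $H^0$ of $L$ along the fibre $\Ker f$ decomposes according to characters of $\Ker f$, i.e. according to points of $\hat f^{-1}([L])$ — this is exactly the Cartier-duality pairing between $\Ker f$ and $\Ker\hat f$.

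The main obstacle I anticipate is making the functoriality of the Poincaré bundle under the isogeny $f$ and its dual $\hat f$ completely precise, including in the non-reduced / inseparable case (where $\Ker f$ may be a non-reduced group scheme) — here one cannot just count points but must use the scheme-theoretic Cartier duality $\Ker f \cong \underline{\Hom}(\Ker\hat f,\mathbb{G}_m)$ and the corresponding decomposition of $f_*\MO_X$ (and its twists) as a sheaf of modules over the group algebra of $\Ker\hat f$. Granting that duality, the rest is bookkeeping with base change for the finite morphisms involved. I would cite \cite{Oda} and \cite{Mumford} for these standard facts rather than reprove them.
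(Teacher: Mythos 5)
The paper's own ``proof'' of this statement is a one-line deferral to \cite[Corollary~1.7]{Oda}, so there is no detailed in-paper argument to compare against. Your second route --- build the adjunction map $\mathrm{pr}_{1*}(\mathcal P_Y|_{Y\times \hat f^{-1}([L])})\to f_*L$ over $Y$, then show it becomes an isomorphism after the faithfully flat pullback $f^*$, using the compatibility $(f\times 1_{\hat Y})^*\mathcal P_Y\simeq (1_X\times\hat f)^*\mathcal P_X$ and flat base change for the finite morphism $\mathrm{pr}_1$ --- is essentially Oda's argument and is the right strategy. Your parenthetical caveat about needing the map first is essential and worth promoting to the main text: two sheaves with abstractly isomorphic pullbacks need not be isomorphic (e.g.\ $f_*\MO_X$ and $\MO_Y^{\deg f}$ both pull back to $\MO_X^{\deg f}$), so faithful flatness only closes the argument once a morphism over $Y$ is in hand.

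The genuine gap is that several intermediate claims are stated in a form that is false precisely in the inseparable case, which is the case the paper needs. The formula ``$f^*f_*L$ matches $\bigoplus_{[L']\in\hat f^{-1}([L])}L'$ over the points of the fibre'' and the fibrewise reduction to ``characters of $\Ker f$'' both presuppose that $S:=\hat f^{-1}([L])$ is reduced; when $f$ is inseparable $S$ has fewer closed points than $\deg f$ and no such direct-sum decomposition exists --- indeed the application in Theorem~\ref{abel-main} hinges on $\hat g_*M_j$ being an \emph{indecomposable} sheaf of rank $p^{e(\dim X-r_X)}>1$, which your pointwise formula would contradict. The correct scheme-theoretic statements are: since $\hat f|_S$ factors through the reduced point $[L]$, one gets $\bigl((1_X\times\hat f)^*\mathcal P_X\bigr)|_{X\times S}\simeq \mathrm{pr}_1^*L$, hence $f^*\mathrm{pr}_{1*}(\mathcal P_Y|_{Y\times S})\simeq L\otimes_k\Gamma(S,\MO_S)$; and on the other side $f^*f_*L\simeq L\otimes_k\Gamma(\Ker f,\MO_{\Ker f})$, via $X\times_YX\simeq X\times\Ker f$, the theorem of the square for $L\in\Pic^0(X)$, and the triviality of $L|_{\Ker f}$. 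You do flag this issue in your closing paragraph, but the repair is not mere bookkeeping: every direct sum in your sketch must be replaced by a tensor product with the (possibly local, non-semisimple) coordinate ring of a finite group scheme, and the fibrewise check becomes the statement that the relevant fibre is a free rank-one module over that ring rather than a sum of characters. With those replacements made, and the Cartier duality $\Ker f\simeq \underline{\Hom}(\Ker\hat f,\mathbb G_m)$ invoked where you indicate, the argument goes through and agrees with Oda's.
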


\begin{proof}
We can apply the same argument as \cite[Corollary~1.7]{Oda}. 
\end{proof}

\begin{thm}[Oda]\label{Oda2}
Let $X$ be an abelian variety. 
Let $S\subset \hat X$ be a closed subscheme of the dual abelian variety $\hat X$. 
If $S$ is zero-dimensional and Gorenstein, then the following assertions hold. 
\begin{enumerate}
\item{There exists an isomorphism between non-commutative $k$-algebras: 
$${\rm End}_{\MO_X}({\rm pr}_{1*}(\mathcal P_X|_{X\times S})) \simeq \Gamma(S, \MO_S).$$
In particular, ${\rm End}_{\MO_X}({\rm pr}_{1*}(\mathcal P_X|_{X\times S}))$ is a commutative ring.}
\item{If $S$ is one point, that is, $\Gamma(S, \MO_S)$ is a local ring, 
then ${\rm pr}_{1*}(\mathcal P_X|_{X\times S})$ is an indecomposable sheaf.}
\end{enumerate}
\end{thm}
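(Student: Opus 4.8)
The plan is to follow Oda's original argument, translating the statements about coherent sheaves on $X$ into statements about modules over $\Gamma(S,\MO_S)$ via the Fourier--Mukai transform. Write $\hat X$ for the dual abelian variety, $\mathcal P_X$ for the normalized Poincar\'e bundle on $X\times\hat X$, and let $i\colon S\hookrightarrow \hat X$ be the given closed immersion with $R:=\Gamma(S,\MO_S)$, a zero-dimensional Gorenstein (hence finite, self-injective) $k$-algebra. Set $E:={\rm pr}_{1*}(\mathcal P_X|_{X\times S})=\Phi_{\mathcal P_X}(i_*\MO_S)$, where $\Phi_{\mathcal P_X}\colon D^b(\hat X)\to D^b(X)$ is the Fourier--Mukai equivalence. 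Since $S$ is zero-dimensional, $\mathcal P_X|_{X\times S}$ is flat over $S$ and its higher direct images along ${\rm pr}_1$ vanish, so $E$ is an honest locally free sheaf on $X$ of rank $\dim_k R=\deg S$ sitting in degree $0$; moreover $\Phi_{\mathcal P_X}(i_*\MO_S)=i_*\MO_S\mapsto E$ is compatible with the $R$-module structure, so $E$ is naturally a sheaf of $R$-modules on $X$.

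For assertion (1), the key point is that $\Phi_{\mathcal P_X}$ is an equivalence of derived categories (Mukai), hence
$${\rm End}_{\MO_X}(E)\;=\;\Hom_{D^b(X)}(E,E)\;\simeq\;\Hom_{D^b(\hat X)}(i_*\MO_S,\,i_*\MO_S).$$
Since $i$ is a closed immersion of a zero-dimensional scheme, $\Hom_{D^b(\hat X)}(i_*\MO_S,i_*\MO_S)=\Hom_{\MO_S}(\MO_S,\MO_S)=R$, and one checks this identification is a ring isomorphism and is $k$-algebra linear. This gives ${\rm End}_{\MO_X}(E)\simeq R$, which is commutative; the ``In particular'' is then immediate. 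The one technical item to verify here is that no higher $\mathrm{Ext}$ groups contribute, i.e.\ that $\Hom_{D^b(\hat X)}(i_*\MO_S,i_*\MO_S)$ is concentrated in degree $0$ after transporting through the equivalence --- but since $E$ is a sheaf (not a complex with several cohomologies), $\Hom_{D^b(X)}(E,E)$ is automatically just $\Hom_{\MO_X}(E,E)$, so this is free.

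For assertion (2), suppose $S$ is a single (fat) point, so $R$ is a finite local $k$-algebra. Then idempotents of ${\rm End}_{\MO_X}(E)\simeq R$ are only $0$ and $1$, since a local ring has no nontrivial idempotents. A direct sum decomposition $E\simeq E_1\oplus E_2$ with both summands nonzero would produce a nontrivial idempotent endomorphism of $E$, a contradiction; hence $E$ is indecomposable. I expect the main obstacle to be the bookkeeping in assertion (1): carefully justifying that the Fourier--Mukai equivalence carries the ring structure on $\mathrm{End}$ to the ring structure on $R$ (rather than merely giving a $k$-linear isomorphism of the underlying vector spaces), and confirming the flatness/cohomology-and-base-change input that makes $E$ a genuine locally free sheaf placed in degree $0$. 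Everything after that is formal, exactly as in \cite[Corollary~1.7, \S2]{Oda}.
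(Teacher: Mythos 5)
Your proposal is correct. Part (2) is word-for-word the paper's argument: a nontrivial decomposition $E\simeq E_1\oplus E_2$ yields complementary nontrivial idempotents in ${\rm End}_{\MO_X}(E)\simeq \Gamma(S,\MO_S)$, contradicting locality. For part (1) the paper gives no argument at all --- it simply cites Oda's Corollary 1.12 (which is stated for elliptic curves) --- whereas you supply a self-contained derivation via Mukai's Fourier--Mukai equivalence $\Phi_{\mathcal P_X}\colon D^b(\hat X)\to D^b(X)$. That route is sound in any characteristic, and it is arguably the right way to justify the extension from elliptic curves to general abelian varieties that the paper leaves implicit. The two verifications you flag are exactly the ones needed and both are easy: since ${\rm pr}_1|_{X\times S}$ is finite (indeed finite flat of degree $\dim_k\Gamma(S,\MO_S)$) and $\mathcal P_X$ is a line bundle, $R{\rm pr}_{1*}(\mathcal P_X\otimes^{L}{\rm pr}_2^*i_*\MO_S)$ is the locally free sheaf $E$ concentrated in degree $0$; and any equivalence of categories induces a $k$-algebra isomorphism on endomorphism rings, with $\Hom_{D^b(\hat X)}(i_*\MO_S,i_*\MO_S)={\rm End}_{\MO_{\hat X}}(i_*\MO_S)=\Gamma(S,\MO_S)$ because both objects lie in the heart and $i$ is a closed immersion. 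One bonus of your approach worth noting: it never uses the Gorenstein hypothesis in part (1), which enters Oda's original sheaf-theoretic computation but is genuinely superfluous for the endomorphism-ring statement (the paper only needs Gorenstein to know the fibers $g^{-1}([M_j])$ qualify when it applies the theorem).
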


\begin{proof}
(1) holds from \cite[Corollary 1.12]{Oda}. 
We show (2). 
Assuming ${\rm pr}_{1*}(\mathcal P_X|_{X\times S})\simeq E_1 \oplus E_2$ with $E_i\neq 0$, 
we derive a contradiction. 
By (1), the ring ${\rm End}_{\MO_X}({\rm pr}_{1*}(\mathcal P_X|_{X\times S}))$ is a commutative ring. 
We obtain idempotents 
${\rm id}_{E_1}\times 0_{E_2}$ and $0_{E_1}\times {\rm id}_{E_2}$ such that 
${\rm id}_{E_1}\times 0_{E_2}+0_{E_1}\times {\rm id}_{E_2}$ is the unity of 
the ring ${\rm End}_{\MO_X}({\rm pr}_{1*}(\mathcal P_X|_{X\times S}))$. 
Therefore, we obtain 
$$\Gamma(S, \MO_S) \simeq {\rm End}_{\MO_X}({\rm pr}_{1*}(\mathcal P_X|_{X\times S})) \simeq A\times B$$ 
for some non-zero rings $A$ and $B$. 
But, $\Gamma(S, \MO_S)$ is a local ring. 
This is a contradiction. 
\end{proof}

We show the main theorem of this subsection. 

\begin{thm}\label{abel-main}
Let $X$ be an abelian variety. 
Set $r_X$ to be the $p$-rank of $X$. 
Let $L\in\Pic^0(X)$. 
Then, for every $e\in\mathbb Z_{>0}$, we obtain 
$$F_*^eL \simeq E_1 \oplus \cdots \oplus E_{p^{er_X}}$$
where each $E_i$ is an indecomposable locally free sheaf of rank $p^{e(\dim X-r_X)}$. 
\end{thm}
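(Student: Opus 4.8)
The plan is to use Oda's structure theory (Theorems~\ref{Oda1} and~\ref{Oda2}) to reduce the statement to a purely combinatorial fact about the fiber of the iterated Verschiebung over the zero section. Let $V^e:\hat X\to \hat X$ be the dual of the $e$-th iterated Frobenius $F^e_X$; it is the $e$-th iterated Verschiebung isogeny, of degree $p^{e\dim X}$. Applying Theorem~\ref{Oda1} with $f=F^e_X$ and the given $L\in\Pic^0(X)$, we get
\[
F^e_*L \simeq {\rm pr}_{1*}\bigl(\mathcal P_X|_{X\times S}\bigr),\qquad S:=(V^e)^{-1}([L]),
\]
a closed subscheme of $\hat X$ of length $p^{e\dim X}$. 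First I would observe that $S$ is a torsor under the scheme-theoretic kernel $\Ker(V^e)$, hence is Gorenstein and zero-dimensional (a finite group scheme over a field is always Gorenstein), so Theorem~\ref{Oda2} applies.

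Next I would decompose $S$ into connected components. Write $\Ker(V^e)=G_0\times_k G_{\text{\'et}}$ for its connected-\'etale sequence; since $k$ is algebraically closed this splitting is canonical, $G_{\text{\'et}}$ is the constant group scheme on the group of $p^e$-torsion points of $\hat X$ lying in the kernel of $V^e$, and by definition of the $p$-rank this group has order $p^{er_X}$ (here one uses that the $p$-rank of $X$ equals that of $\hat X$, and that $\Ker(V^e)[\text{\'et}]$ has order $p^{er_X}$). Correspondingly $S$, being a $\Ker(V^e)$-torsor over the algebraically closed field $k$, is a disjoint union of exactly $p^{er_X}$ connected components $S_1,\dots,S_{p^{er_X}}$, each isomorphic to the local finite Gorenstein scheme $G_0$ of length $p^{e\dim X}/p^{er_X}=p^{e(\dim X-r_X)}$. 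Since $\mathcal P_X|_{X\times S}$ is the direct sum over the components of $\mathcal P_X|_{X\times S_i}$, and ${\rm pr}_{1*}$ commutes with finite direct sums, we obtain
\[
F^e_*L\simeq \bigoplus_{i=1}^{p^{er_X}} E_i,\qquad E_i:={\rm pr}_{1*}\bigl(\mathcal P_X|_{X\times S_i}\bigr).
\]
Each $S_i$ is a single point (one local Artinian ring), so Theorem~\ref{Oda2}(2) gives that $E_i$ is indecomposable; its rank is the length of $\Gamma(S_i,\MO_{S_i})$, namely $p^{e(\dim X-r_X)}$, and it is locally free since $F^e_X$ is finite flat and $L$ is a line bundle (so $F^e_*L$ is locally free, and a direct summand of a locally free sheaf on a smooth variety is locally free). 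This yields the claimed decomposition.

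The main obstacle, and the step I would spell out most carefully, is the identification of the number of connected components of $S$ with $p^{er_X}$: one must justify that $S$ is a torsor under $\Ker(V^e)$, that torsors over an algebraically closed field are \'etale-locally (hence globally) trivial so that $S$ and $\Ker(V^e)$ have the same number of components and the same local structure at each, and that $\#\pi_0(\Ker(V^e))=p^{er_X}$ — the last point being exactly the definition of the $p$-rank together with the fact that $\Ker(V^e)_{\text{\'et}}$ is Cartier dual to $\Ker(F^e_{\hat X})$ and $X$, $\hat X$ have equal $p$-rank. The remaining verifications (Gorensteinness of $S$, local freeness, computation of ranks from lengths) are routine given Oda's theorems.
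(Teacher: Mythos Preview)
Your strategy is the same as the paper's: apply Theorem~\ref{Oda1} to identify $F^e_*L$ with ${\rm pr}_{1*}(\mathcal P|_{X\times S})$ for $S$ the fiber of the dual isogeny, decompose $S$ into its connected components, and invoke Theorem~\ref{Oda2}(2) on each. One technical point you elide: the absolute Frobenius $F^e_X:X\to X$ is not a $k$-morphism, hence not an isogeny of abelian varieties over $k$, so Theorem~\ref{Oda1} does not apply to it as written; correspondingly your ``$V^e:\hat X\to\hat X$'' is not well defined in the duality theory of abelian $k$-varieties. The paper fixes this by factoring $F^e_X$ through the relative Frobenius $F^{e,{\rm rel}}_X:X\to X^{(p^e)}$ and working with its honest dual $\widehat{X^{(p^e)}}\to\hat X$; once you make this adjustment, your argument goes through unchanged. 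Your route to the Gorenstein property of $S$ (it is a torsor under a finite group scheme over $k$, and such group schemes are Gorenstein) is a bit more direct than the paper's, which instead factors the dual isogeny as purely inseparable followed by \'etale and observes that fibers of a finite map to a smooth variety are local complete intersections.
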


\begin{proof}
Fix $e\in\mathbb Z_{>0}$. 
Consider the absolute Frobenius morphism $F^e_X:X \to X.$ 
Set $X^{(p^e)}:=X \times_{k, F_k^e} k$ and we obtain 
$$F^e_X:X \xrightarrow{F^{e, {\rm rel}}_X} X^{(p^e)} \overset{\beta}\to X.$$
where $\beta$ is a non-$k$-linear isomorphism of schemes and 
$$F^{e, {\rm rel}}_X:X \to X^{(p^e)}$$ 
is $k$-linear. 
Thus, it suffices to show that 
$$(F^{e, {\rm rel}}_X)_*L \simeq E'_1 \oplus \cdots \oplus E'_{p^{er_X}}$$
for some indecomposable locally free sheaves $E'_i$ of rank $p^{e(\dim X-r_X)}$. 
Take the dual of $F^{e, {\rm rel}}_X$: 
$$\widehat{(F^{e, {\rm rel}}_X)}:\widehat{X^{(p^e)}} \to \hat X.$$

We show that the number of the fiber of every closed point of $\widehat{(F^{e, {\rm rel}}_X)}$ is $p^{er_X}$. 
Since $\widehat{(F^{e, {\rm rel}}_X)}(k)$ is a group homomorphism, 
the numbers of all the fibers are the same. 
Thus, it suffices to prove that the number of 
$\widehat{(F^{e, {\rm rel}}_X)}^{-1}(0_{\hat X})=\Ker(\widehat{(F^{e, {\rm rel}}_X)}(k))$ is $p^{er_X}$. 
This is equivalent to show that the number of line bundles $M\in \Pic^0(X^{(p^e)})=\widehat{X^{(p^e)}}(k)$ 
such that $(F^{e, {\rm rel}}_X)^*M\simeq \MO_X$ is $p^{er_X}$. 
Since $\beta:X^{(p^e)}\to X$ is an isomorphism, 
we prove that the number of line bundles $N\in \Pic^0(X)$ 
such that $N^{p^e}=(F^{e}_X)^*N\simeq \MO_X$ is $p^{er_X}$. 
This follows from the definition of the $p$-rank.

Taking the separable closure, we obtain 
$$\widehat{(F^{e, {\rm rel}}_X)}:\widehat{X^{(p^e)}} \overset{g}\to Y \overset{h}\to \hat X,$$
where $Y$ is a normal projective variety, 
$g$ is a finite surjective purely inseparable morphism and 
$h$ is a finite surjective separable morphism. 
Since the numbers of every fiber of $\widehat{(F^{e, {\rm rel}}_X)}$ are the same, 
$h$ is an etale morphism. 
In particular, $Y$ is an abelian variety (\cite[Section 18, Theorem]{Mumford}) and we may assume that 
$g$ and $h$ are isogenies. 
Take the duals again and we obtain 
$$F^{e, {\rm rel}}_X:X \xrightarrow{\hat h} \hat Y \xrightarrow{\hat g} X^{(p^e)}.$$

Let 
$$\Ker(\widehat{F^{e, {\rm rel}}_X})=g^{-1}([M_1]) \amalg \cdots \amalg g^{-1}([M_{p^{er_X}}])$$
be the decomposition into one point schemes. 
By Theorem~\ref{Oda1}, we obtain 
\begin{eqnarray*}
(F^{e, {\rm rel}}_X)_*\MO_X 
&\simeq& {\rm pr}_{1*}(\mathcal P_{X^{(p^e)}}|_{X^{(p^e)}\times \Ker(\widehat{F^{e, {\rm rel}}_X})})\\
&\simeq& {\rm pr}_{1*}(\mathcal P_{X^{(p^e)}}|_{X^{(p^e)}\times g^{-1}([M_1])})\oplus\cdots\oplus 
{\rm pr}_{1*}(\mathcal P_{X^{(p^e)}}|_{X^{(p^e)}\times g^{-1}([M_{p^{er_X}}])})\\
&\simeq&\hat g_*M_1\oplus \cdots \oplus \hat g_*M_{p^{er_X}}.
\end{eqnarray*}
Thus, it suffices to show that each locally free sheaf 
$${\rm pr}_{1*}(\mathcal P_{X^{(p^e)}}|_{X^{(p^e)}\times g^{-1}([M_j])}) \simeq \hat g_*M_j$$ 
is indecomposable. 
We see that $g^{-1}([M_j])$ is one point. 
Thus, if $g^{-1}([M_j])$ is Gorenstein, 
then ${\rm pr}_{1*}(\mathcal P_{X^{(p^e)}}|_{X^{(p^e)}\times g^{-1}([M_j])})$ is indecomposable by Theorem~\ref{Oda2}(2). 
Since $g$ is finite and $Y$ is smooth, 
$g^{-1}([M_j])$ is a local complete intersection scheme. 
In particular, $g^{-1}([M_j])$ is Gorenstein. 
\end{proof}

\subsection{Curves}

In this subsection, we show Theorem~\ref{main-curve}. 
We need the following result from the theory of stable vector bundles. 

\begin{thm}\label{higher-genus}
Let $X$ be a smooth projective curve of genus $g\geq 2$. 
Let $L$ be a line bundle on $X$. 
Then, $F_*^eL$ is indecomposable for every $e\in\mathbb Z_{>0}$. 
\end{thm}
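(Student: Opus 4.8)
The plan is to reduce the statement to the known fact that the Frobenius push-forward of a stable bundle is stable on a curve of genus $g \geq 2$, together with the trivial observation that a stable bundle is indecomposable. First I would recall (as cited in the introductory remark, \cite{Sun}) that if $X$ is a smooth projective curve of genus $g \geq 2$ and $E$ is a stable vector bundle on $X$, then $F_*E$ is again stable. Since a line bundle $L$ on any curve is trivially stable (it has no proper subsheaves of the same rank to compare slopes with, so the stability condition is vacuous), we deduce that $F_*L$ is stable.

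Next I would iterate: one checks that $F_*^eL = F_*(F_*^{e-1}L)$, so by induction on $e$, using that $F_*^{e-1}L$ is stable (base case $e=1$ just handled, or even $e=0$ with $F_*^0 L = L$), the stability of $F_*^eL$ follows from a single application of the stability-preservation theorem at each step. Finally, a stable vector bundle is automatically indecomposable: if $F_*^eL \simeq E_1 \oplus E_2$ with both $E_i \neq 0$, then one of the summands, say $E_1$, would be a proper nonzero subbundle; comparing slopes, $E_1$ and $E_2$ are each saturated subsheaves, and stability forces $\mu(E_1) < \mu(F_*^eL)$ and $\mu(E_2) < \mu(F_*^eL)$, which is incompatible with $\mu(F_*^eL)$ being a weighted average of $\mu(E_1)$ and $\mu(E_2)$. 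Hence $F_*^eL$ is indecomposable.

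The main (and essentially only) obstacle is citing the stability-preservation result correctly: it is a theorem that in characteristic $p$, for $X$ of genus $g \geq 2$, the functor $F_*$ sends stable bundles to stable bundles — this is the content of \cite{Sun} as referenced in the remark following Theorem~\ref{0general-type}. The hypothesis $g \geq 2$ is genuinely needed here (it fails for elliptic and rational curves, as Theorem~\ref{0main2} shows), so the proof cannot proceed without invoking it. Everything else is formal: the vacuity of stability for line bundles, the inductive application of $F_*$, and the implication stable $\Rightarrow$ indecomposable.
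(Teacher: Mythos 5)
Your proposal is correct and follows essentially the same route as the paper: line bundles are stable, Sun's stability-preservation theorem for $F_*$ on curves of genus $g\geq 2$ gives stability of $F_*^eL$ (the paper cites \cite[Theorem~2.2]{Sun} directly for the $e$-th iterate, where you iterate one step at a time), and stable implies indecomposable. No gaps.
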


\begin{proof}
Since $L$ is a line bundle, $L$ is a stable vector bundle. 
Then, by \cite[Theorem~2.2]{Sun}, $F_*^eL$ is also a stable vector bundle. 
Since stable vector bundles are indecomposable, $F_*^eL$ is indecomposable. 
\end{proof}

We show the main theorem of this subsection. 

\begin{thm}\label{main-curve}
Let $X$ be a smooth projective curve of genus $g$. 
Fix an arbitrary positive integer $e$. 
Then the following assertions hold. 
\begin{enumerate}
\item[$(0)$]{If $g=0$, then $F^e_*\MO_X \simeq \bigoplus L_j$ where 
every $L_j$ is a line bundle.}
\item[$(1{\rm or})$]{If $g=1$ and $X$ is an ordinary elliptic curve, 
then $F^e_*\MO_X \simeq \bigoplus L_j$ where every $L_j$ is a line bundle.}
\item[$(1{ \rm ss})$]{If $g=1$ and $X$ is a supersingular elliptic curve, 
then $F^e_*\MO_X$ is indecomposable.}
\item[$(2)$]{If $g\geq 2$, then $F^e_*\MO_X$ is indecomposable.}
\end{enumerate}
\end{thm}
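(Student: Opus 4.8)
The plan is to treat the four cases separately, since each genus range behaves differently, and to reuse the results already established in the excerpt wherever possible. For case $(0)$, where $X=\mathbb P^1$, the structure sheaf push-forward $F^e_*\MO_{\mathbb P^1}$ is a rank-$p^e$ vector bundle on $\mathbb P^1$, and by Grothendieck's splitting theorem every vector bundle on $\mathbb P^1$ splits as a direct sum of line bundles. So $(0)$ is immediate; alternatively one notes $\mathbb P^1$ is toric and cites \cite{Achinger} or \cite{Th}, but the direct argument is cleaner. For case $(1\mathrm{or})$, an ordinary elliptic curve is a one-dimensional ordinary abelian variety, so Lemma~\ref{ord-abel} applies directly and gives $F^e_*\MO_X\simeq\bigoplus_{j\in J}M_j^{(e)}$ with the $M_j^{(e)}$ the $p^e$-torsion line bundles; this is exactly the desired decomposition into line bundles.

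For case $(1\mathrm{ss})$, a supersingular elliptic curve $X$ is a one-dimensional abelian variety with $p$-rank $r_X=0$, so Theorem~\ref{abel-main} (applied with $L=\MO_X$) gives $F^e_*\MO_X\simeq E_1$, a single indecomposable locally free sheaf of rank $p^{e(\dim X-r_X)}=p^e$. That is precisely the claim that $F^e_*\MO_X$ is indecomposable. Case $(2)$, where $g\geq 2$, is Theorem~\ref{higher-genus} applied with $L=\MO_X$: $\MO_X$ is a line bundle hence a stable vector bundle, and by \cite[Theorem~2.2]{Sun} its Frobenius push-forward $F^e_*\MO_X$ is again stable, hence indecomposable.

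So in fact all four cases are essentially corollaries of results already proved in this paper (or in the cited literature), and the "proof" is mainly a matter of assembling the pieces and checking that the hypotheses match. The main point requiring a sentence of care is case $(0)$: one must observe that $F^e_*\MO_{\mathbb P^1}$ is a genuine vector bundle (locally free of finite rank), which holds because $F^e$ is a finite flat morphism between regular schemes, so Grothendieck's theorem on vector bundles over $\mathbb P^1$ applies. I do not anticipate any serious obstacle here; the only mild subtlety is making sure, in case $(1\mathrm{ss})$, that "supersingular" is being used in the sense of $p$-rank zero (equivalently, the Frobenius on $H^1(X,\MO_X)$ is zero), which is the standard convention for elliptic curves and is exactly what makes $r_X=0$ in the notation of Theorem~\ref{abel-main}. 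I would write the proof as four short paragraphs, one per case, each citing the relevant earlier statement.
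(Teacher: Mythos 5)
Your proposal is correct and follows essentially the same route as the paper: Grothendieck's splitting theorem for $g=0$, Theorem~\ref{abel-main} for the elliptic cases (the paper cites it for both $(1\mathrm{or})$ and $(1\mathrm{ss})$, whereas you use Lemma~\ref{ord-abel} for the ordinary case, which is an equally valid already-established result), and Theorem~\ref{higher-genus} for $g\geq 2$.
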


\begin{proof}
The assertion (0) immediately follows from the fact that 
every locally free sheaf of finite rank on $\mathbb P^1$ is decomposed into the direct sum of line bundles. 
The assertions $(1{\rm or})$ and $(1{ \rm ss})$ hold by Theorem~\ref{abel-main}. 
The assertion (2) follows from Theorem~\ref{higher-genus}.
\end{proof}

By Theorem~\ref{main-curve}, it is natural to ask the following question. 

\begin{ques}\label{q-general-type}
If $X$ is a smooth projective surface $X$ of general type, 
then is $F_*\MO_X$ indecomposable?
\end{ques}

As far as the authors know, this question is open. 
On the other hand, 
if we drop the assumption that $X$ is smooth, 
then there exists a counter-example as follows. 
For a related result, see also \cite[Example~3.5]{Hirokado}.

\begin{thm}
There exists a projective normal surface $X$ which satisfies the following properties. 
\begin{enumerate}
\item{The singularities of $X$ are at worst canonical.}
\item{$K_X$ is ample.}
\item{$F_*\MO_X$ is not indecomposable.}
\end{enumerate}
\end{thm}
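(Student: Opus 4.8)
The goal is to construct a projective normal surface $X$ with at worst canonical singularities, ample $K_X$, and $F_*\MO_X$ decomposable. The natural strategy is to produce $X$ as a quotient of a product of curves — or as a suitable cover — so that decomposability of $F_*\MO_X$ is forced by a group action, while the canonical/ample conditions follow from a standard ramification computation. More concretely, the plan is to take an ordinary elliptic curve $E$ (so that $F_*\MO_E$ splits into line bundles) together with a smooth projective curve $C$ of high genus carrying a free action of a suitable finite group $G$ (e.g. a cyclic group $\mathbb{Z}/p\mathbb{Z}$ or $\mathbb{Z}/\ell\mathbb{Z}$ acting on $C$ and compatibly on $E$ by translations), and to set $X := (E \times C)/G$ for the diagonal action. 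Since $G$ acts freely in at least one factor but the diagonal action on the product need \emph{not} be free, $X$ acquires only quotient singularities coming from the fixed points; choosing $G = \mathbb{Z}/2\mathbb{Z}$ or a cyclic group of order prime to $p$ acting with isolated fixed points gives Du Val (canonical) singularities of type $A_n$.

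\medskip

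The key steps, in order, are as follows. \emph{Step 1: the cover.} Let $\pi: E\times C \to X$ be the quotient map; it is finite of degree $|G|$, étale in codimension one if $G$ acts with only isolated fixed points, and by the projection formula $\pi_*\MO_{E\times C} = \bigoplus_{\chi} L_\chi$ decomposes into the eigensheaves $L_\chi$ of the $G$-action on $\pi_*\MO_{E\times C}$, indexed by characters $\chi$ of $G$; each $L_\chi$ is a nonzero divisorial sheaf on $X$. \emph{Step 2: descent of Frobenius splitting.} Since $E\times C$ is the product of a curve with $F_*\MO$ splitting into line bundles and a curve of genus $\geq 2$, one checks that $E\times C$ is $F$-split iff $C$ is — which is false for general $C$ of genus $\geq 2$ — so instead we should not route through $F$-splitting of the cover. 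The cleaner route: compute $F_*\MO_X$ directly using the $G$-equivariance of Frobenius. Because $F$ commutes with the $G$-action, $F_*\MO_X = (F_*\pi_*\MO_{E\times C})^G$ and $\pi_* F_*\MO_{E\times C} = F_* \pi_*\MO_{E\times C} = \bigoplus_\chi F_* L_\chi$; taking $G$-invariants of a direct sum of $G$-stable pieces yields a direct sum decomposition $F_*\MO_X = \bigoplus_\chi (F_*L_\chi)^G$, and one shows at least two of these summands are nonzero — e.g.\ the summand from the trivial character (which contains $\MO_X$) and one more — forcing $F_*\MO_X$ decomposable. \emph{Step 3: the canonical class.} By Hurwitz/ramification for $\pi$ in codimension one (trivial, as $\pi$ is étale there), $K_{E\times C} = \pi^*K_X$, so $K_X$ is $\mathbb{Q}$-Cartier, the singularities are canonical (they are quotient singularities with $K_{E\times C} = \pi^*K_X$, i.e.\ crepant, hence Du Val), and $K_X$ is ample because $\pi^*K_X = K_{E\times C} = p_C^*K_C$ is nef and big with positive self-intersection — strictly one must check $K_X$ is ample rather than just nef-big, which follows from $K_C$ ample on $C$ and Nakai–Moishezon on $X$, since the only curves contracted would have to lie in fibers of $X\to C/G$, and on those $K_X$ is still positive as $K_C$ has positive degree.

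\medskip

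\textbf{The main obstacle} I anticipate is Step 2: ensuring that $F_*\MO_X$ genuinely \emph{splits} rather than merely being a direct image that one hopes decomposes. The equivariant decomposition $F_*\MO_X = \bigoplus_\chi (F_*L_\chi)^G$ is formal once $|G|$ is prime to $p$ (so that the eigenspace projectors live in $\MO_X[G/\ker]$ and invariants commute with everything); the genuine content is showing at least two eigen-summands are nonzero, i.e.\ that $(F_*L_\chi)^G \neq 0$ for some nontrivial $\chi$. This amounts to finding a $G$-equivariant global section of $F_*L_\chi$ upstairs, equivalently a nonzero map $\MO_{E\times C} \to F_*L_\chi$ transforming by $\chi$; the existence of such is exactly where the ordinariness of $E$ enters — $F_*\MO_E$ contains the nontrivial $p^{\,}$-torsion line bundles of $E$ as summands, and pulling these back to $E\times C$ and matching characters produces the desired nonzero eigen-summand. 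Making the bookkeeping of characters versus torsion line bundles precise, and confirming the resulting $X$ has \emph{only} canonical (not worse) singularities for the chosen $G$ and action, is the delicate part; everything else is routine.
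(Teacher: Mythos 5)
There is a fatal gap at condition (2): for $X=(E\times C)/G$ with $E$ an elliptic curve, $K_X$ is never ample. Since $K_{E\times C}=p_E^*K_E+p_C^*K_C=p_C^*K_C$ and your action is free in codimension one, $\pi^*K_X=p_C^*K_C$ is a sum of fibres of the second projection; hence $K_X^2=0$ and $K_X$ has degree zero on the images of the elliptic fibres $E\times\{c\}$, which are exactly the fibres of the induced fibration $X\to C/G$. So $K_X$ is nef but neither big nor ample, and your Nakai--Moishezon step is inverted: the curves on which positivity fails are precisely those lying in fibres over $C/G$, where $p_C^*K_C$ restricts to degree $0$ (you have confused the two projections). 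This cannot be repaired by changing $G$ or the action, because the ordinary elliptic factor --- your sole source of the decomposition of $F_*\MO_X$ in Step 2 --- is what forces $K_X$ to be numerically trivial along an elliptic fibration; replacing $E$ by a curve of genus $\geq 2$ restores ampleness but destroys the decomposition mechanism (indeed $F_*\MO_C$ is then indecomposable, and the resulting question is the open Question~\ref{q-general-type}). A secondary, fixable, issue: ``\'etale in codimension one and crepant, hence Du Val'' is not a valid inference --- a cyclic quotient singularity of type $\frac{1}{n}(1,a)$ has no ramification divisor yet is canonical only when the stabilizer acts through $SL_2$, so you would in any case need to arrange the local weights at the fixed points to sum to $0$ modulo $n$.

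The paper's route is entirely different and avoids the tension you ran into by using a \emph{purely inseparable} cover rather than a tame quotient: take an ordinary abelian surface $S$, a very ample $H$, a general $s\in H^0(S,H^p)$, and let $\pi:X=\Spec_S(\MO_S\oplus H^{-1}\oplus\cdots\oplus H^{-(p-1)})\to S$ be the associated degree-$p$ inseparable cover (an $\alpha_{\mathcal L}$-torsor in the sense of Liedtke). This produces $A_{p-1}$ singularities and, crucially, $K_X=\pi^*K_S+(p-1)\pi^*H\sim(p-1)\pi^*H$, which is ample: the inseparable cover contributes $(p-1)H$ to the canonical class, which an \'etale-in-codimension-one quotient can never do. Decomposability is then cheap: $F_X$ factors as $X\xrightarrow{\pi}S\xrightarrow{\varphi}X$, and $F$-splitting of the ordinary abelian surface $S$ gives $\pi_*\MO_X\simeq\MO_S\oplus E$, whence $F_{X*}\MO_X\simeq\varphi_*\MO_S\oplus\varphi_*E$. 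If you want to salvage your outline, the lesson is that the positivity of $K_X$ must come from the covering data itself, not from a factor of a product.
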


\begin{proof}
Let $S$ be an ordinary abelian surface. 
Fix a very ample line bundle $H$ on $S$. 
Let $s\in H^0(X, H^p)$ be a general element and 
set 
$$\pi:X:=\Spec_S(\MO_S\oplus H^{-1} \oplus\cdots\oplus H^{-(p-1)}) \to S$$
to be the finite purely inseparable morphism where 
the $\MO_S$-algebra $\MO_S\oplus H^{-1} \oplus\cdots\oplus H^{-(p-1)}$ 
is defined by $s\in H^0(X, H^p)$. 
By \cite[Remark~3.5(1)]{Liedtke}, we can apply \cite[Theorem~3.4]{Liedtke} for $\mathcal L:=H$. 
Since the scheme $X$ constructed above 
is the same as the $\alpha_{\mathcal L}$-torsor $\delta(s)$ appearing in \cite[Theorem~3.4]{Liedtke}. 
Therefore, $X$ is normal and has at worst $A_{p-1}$-singularities. 
Thus (1) holds. 
We see 
$$K_X=\pi^*K_S+(p-1)\pi^*H \sim (p-1)\pi^*H,$$
which implies (2). 

We show (3). 
Since $\pi:X \to S$ is a finite purely inseparable morphism of degree $p$, 
the absolute Frobenius morphisms of $X$ and $S$ factors through $\pi$: 
$$F_S:S \to X \overset{\pi}\to S, \,\,\, F_X:X \overset{\pi}\to S \overset{\varphi}\to X.$$
Since $S$ is $F$-split, the identity homomorphism ${\rm id}_{\MO_S}$ 
factors through $\pi_*\MO_X$: 
$${\rm id}_{\MO_S}:\MO_S \to \pi_*\MO_X \to (F_S)_*\MO_S \to \MO_S.$$
This implies 
$$\pi_*\MO_X\simeq \MO_S \oplus E$$
for some coherent sheaf $E$. 
Taking the push-forward by $\varphi$, we see 
$$(F_X)_*\MO_X=\varphi_*\pi_*\MO_X\simeq \varphi_*\MO_S \oplus \varphi_*E.$$
This implies (3). 
\end{proof}

\begin{rem}
If $X$ is a smooth projective curve of general type, 
then $F_*\MO_X$ is indecomposable by Theorem~\ref{higher-genus}. 
Theorem~\ref{higher-genus} depends 
on the theory of the stable vector bundles. 
For the $2$-dimensional case, 
a similar result is obtained by Kitadai--Sumihiro (\cite{KS}), Liu--Zhou (\cite{LZ}), 
and Sun (\cite{Sun2}). 
For example, \cite[Theorem~4.9 and Remark~4.10]{Sun2} imply that $F_*\MO_X$ is indecomposable 
under the assumptions that $\mu(\Omega_X^1)>0$ and $\Omega_X^1$ is semi-stable. 
\end{rem}

\end{document}